\newtheorem{prop}{Proposition}[section]
\newtheorem{defn}{Definition}[section]
\newtheorem{obs}{Observation}
\newtheorem{cor}{Corollary}[section]
\newtheorem{rem}{Remark}[section]
\title{On weakly irreducible nonnegative tensors and  interval hull of some classes of tensors}
\author{M. Rajesh Kannan\thanks{Department of Mathematics, Technion- Israel Institute of Technology, Haifa 32000, Israel(rajeshkannan1.m@gmail.com, rajesh@tx.technion.ac.il).}
\and Naomi Shaked-Monderer\thanks{Department of Economics and Management, The Max Stern Academic College of Yezreel Valley, Yezreel Valley 19300, Israel(nomi@tx.technion.ac.il).}
\and  Abraham Berman\thanks{Department of Mathematics, Technion- Israel Institute of Technology, Haifa 32000, Israel(berman@tx.technion.ac.il).}}
\begin{document}
\maketitle
\slugger{mms}{xxxx}{xx}{x}{x--x}

\begin{abstract}
In this article we prove the strict monotonicity of the spectral radius of weakly irreducible nonnegative tensors.  As an application, we give a necessary and sufficient condition for an  interval hull of tensors to be contained in the set of all strong $\mathcal{M}$-tensors. We also establish some properties of $\mathcal{M}$-tensors. Finally, we consider some problems  related to interval hull of positive (semi)definite tensors and  $P(P_0)$-tensors.

\end{abstract}

\begin{keywords}Weakly irreducible nonnegative tensors, $\mathcal{M}$-tensors, Positive definite tensors, $P$-tensors, Interval hull of tensors.\end{keywords}

\begin{AMS}15A69\end{AMS}

\pagestyle{myheadings}
\thispagestyle{plain}
\markboth{On weakly irreducible nonnegative tensors}{M. Rajesh Kannan N. Shaked-Monderer and A. Berman}

\section{Introduction}

An $m$-order $n$-dimensional square real tensor is a multidimensional array of $n^m$ elements of the form
\begin{center}
$\mathcal{A} = (A_{i_1\dots i_m})$, $A_{i_1\dots i_m} \in \mathbb{R}$,  $1 \leq i_1, \dots , i_m \leq n.$
\end{center}
(A square matrix of order $n$ is a $2$-order $n$-dimensional square tensor.)
An $m$-order $n$-dimensional square real tensor is said to be a nonnegative (positive) tensor if all its entries are nonnegative (positive).
 In  \cite{lim} and \cite{qi1},   the notion of eigenvalues of some classes of tensors was introduced independently. In \cite{cheng} and \cite{chang2}, a unified notion of eigenvalues was given for all $m$-order $n$-dimensional tensors.  Subsequently a lot of work has been done in the spectral theory of tensors. In particular, a Perron-Frobenius theory was developed for nonnegative tensors in \cite{cheng}, \cite{Shmuel}, \cite{ya-ya2} and \cite{ya-ya1}, see also the survey article \cite{cheng1}. In \cite{lim}, the notion of irreducible tensors was defined and in \cite{Shmuel} and \cite{hu}, the notion of weak irreducibility. Monotonicity of the spectral radius of nonnegative tensors and strict monotonicity of the spectral radius of irreducible nonnegative tensors were proved in \cite{ya-ya2}, \cite{ya-ya1}. Here, we prove strict monotonicity of the spectral radius of weakly irreducible nonnegative tensors. This is done in Section \ref{weak}.

The concepts of   positive (semi)definite matrices, $P(P_0)$-matrices and $M$-matrices were extended to tensors  in  \cite{qi1}, \cite{song_qi} and \cite{qi-zh-zho}, respectively. When modeling a real life problem the data may contain some errors because of inaccuracy of measurements, noises, round-off errors, etc. In this context it is natural to consider matrices or tensors whose  entries are from some intervals. If $\mathcal{B}-\mathcal{A}$ is a nonnegative tensor ($\mathcal{A} \leq \mathcal{B}$), then the interval hull  and the interior of the interval hull of the tensors $\mathcal{A}$ and $\mathcal{B}$ are defined as follows:
\begin{itemize}
 \item $I(\mathcal{A}, \mathcal{B}) = \{\mathcal{C} : C_{i_1\dots i_m} = t_{i_1\dots i_m} A_{i_1\dots i_m} + (1 - t_{i_1\dots i_m}) B_{i_1\dots i_m}, ~ t_{i_1\dots i_m} \in [0, 1]  ~\mbox{for all}~ i_1, \dots , i_m \in \{1,\dots ,n\}\}$,
  \item $int(I(\mathcal{A}, \mathcal{B})) = \{\mathcal{C} : C_{i_1\dots i_m} = t_{i_1\dots i_m} A_{i_1\dots i_m} + (1 - t_{i_1\dots i_m}) B_{i_1\dots i_m}, ~ t_{i_1\dots i_m} \in (0, 1) ~\mbox{for all}~ i_1, \dots , i_m \in \{1,\dots ,n\}\}.
  $

\end{itemize}
Note that $int (I(\mathcal{A}, \mathcal{B}))$ need not be the  topological interior of $I(\mathcal{A}, \mathcal{B})$ in $\mathbb{R}^{n^m}$.
 In \cite{rajkcs} interval hull of $M$-matrices was studied. In Section \ref{interval_m_tensors} we extend these results for $\mathcal{M}$-tensors.
In \cite{rohn1} and \cite{rohn2}  interval hull of positive (semi)definite matrices and $P(P_0)$-matrices was studied. For each of these classes a necessary and sufficient condition for an interval hull of matrices to be in the class was given.   In Section \ref{positive}, we extend these results to interval hull of positive (semi)definite tensors and $P(P_0)$-tensors.

\section{Notation, definitions and known results}\label{definitions}

Let $\mathbb{R}^{n}(\mathbb{C}^n)$ denote the $n$-dimensional real (complex) vector space. Let $\mathbb{R}^{n}_+ ~ (\mathbb{R}^n_{++})$ denote the set of all real $n$-tuples with nonnegative (positive) entries respectively. Vectors  are denoted by lower case letters ($x, y, \dots$), matrices by upper case letters ($A, B, \dots$) and tensors by calligraphic capital letters ($\mathcal{A}, \mathcal{B}, \dots$). The $i^{th}$ entry of a vector $x$ is denoted by $x_i$, the $(i,j)^{th}$ entry of a matrix $A$ is denoted by $A_{ij}$  and the $(i_1,\dots, i_m)^{th}$ entry of a tensor $\mathcal{A}$ is denoted by $A_{i_1\dots i_m}$.
For two $m$-order $n$-dimensional real tensors $\mathcal{A}$ and $ \mathcal{B}$, we write  $\mathcal{A} \leq \mathcal{B}$ if $A_{i_1\dots i_m} \leq B_{i_1\dots i_m}$ for all $1 \leq i_1, \dots , i_m \leq n$ and $\mathcal{A} < \mathcal{B}$ if $A_{i_1\dots i_m} < B_{i_1\dots i_m}$ for all $1 \leq i_1, \dots , i_m \leq n.$
For a subset $\alpha$ of $\{1, \dots ,n\}$,  $|\alpha|$ denotes the number of elements of $\alpha$.
\begin{defn}
{\rm A tensor $\mathcal{A}$ is said to be \emph{symmetric} if its entries are invariant under any permutation of the indices $\{i_1, \dots, i_m\}$.}
\end{defn}
A particular example is the $m$-order $n$-dimensional identity tensor, denoted by $\mathcal{I}=(I_{i_1\dots i_m})$, defined as follows:$$I_{i_1\dots i_m} =
\left\{
	    \begin{array}{ll}
		1 & \mbox{if } i_1 = \dots = i_m, \\
		0 & \mbox{otherwise}.
	\end{array}
\right.$$

\begin{defn}
{\rm Let $\mathcal{A}=(A_{i_1\dots i_m})$ be an $m$-order $n$-dimensional tensor and $\alpha \subset\{1, \dots , n\}$ with $|\alpha| = r$.
A \emph{principal subtensor} $\mathcal{A}[\alpha]$ of the tensor $\mathcal{A}$ with index set $\alpha$ is an $m$-order $r$-dimensional subtensor of  $\mathcal{A}$ consisting of $r^m$ elements defined as
follows:

\begin{center}
$\mathcal{A}[\alpha] = (A_{i_1\dots i_m})$ , where $i_1, \dots , i_m \in \alpha$.
\end{center}
}
\end{defn}

In \cite{lim}, the notion of irreducible tensors was introduced.

\begin{defn}
{\rm
An $m$-order $n$-dimensional tensor $\mathcal{A} = (A_{i_1\dots i_m})$  is called \emph{reducible} if there exists a nonempty proper subset $\alpha \subset \{1, \dots ,n \}$ such that
\begin{center}
$A_{i_1\dots i_m} = 0$ for all $i_1 \in \alpha $ and $i_2, \dots ,i_m \notin \alpha$.
\end{center}
A tensor $\mathcal{A}$ is said to be \emph{irreducible} if it is not reducible.
}\end{defn}

In \cite{Shmuel} and \cite{hu},  the notion of weakly irreducible nonnegative tensors  was introduced. With a nonnegative tensor $\mathcal{A}= (A_{i_1\dots i_m})$, we associate the nonnegative $n \times n $ matrix $R\mathcal{(A)}$:
\begin{center}
$R\mathcal{(A)}_{ij} = \sum_{ \{i_2, \dots, i_m\} \ni j} A_{ii_2\dots i_m}$.
\end{center}

\begin{defn}
{\rm
A nonnegative tensor $\mathcal{A}= (A_{i_1\dots i_m})$ is said to be \emph{weakly reducible} if $R\mathcal{(A)}$ is a reducible matrix. It is  \emph{weakly irreducible} if it is not weakly reducible.
}\end{defn}

The following result holds:
\begin{prop}  \cite[Lemma 2.1]{cheng},  \cite[Lemma 3.1]{Shmuel} \label{irr_imply_weakirr}
Let $\mathcal{A}$ be a nonnegative tensor. If $\mathcal{A}$ is irreducible, then $\mathcal{A}$ is weakly irreducible.
\end{prop}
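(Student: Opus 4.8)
The plan is to prove the contrapositive: I will show that if $\mathcal{A}$ is weakly reducible then it is reducible. So suppose $R(\mathcal{A})$ is a reducible matrix. By the standard characterization of reducibility of a square matrix, there exists a nonempty proper subset $\alpha \subset \{1,\dots,n\}$ such that $R(\mathcal{A})_{ij} = 0$ for every $i \in \alpha$ and every $j \notin \alpha$.

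Next I would exploit the nonnegativity of $\mathcal{A}$. Fix $i \in \alpha$ and $j \notin \alpha$. Each summand in $R(\mathcal{A})_{ij} = \sum_{\{i_2,\dots,i_m\}\ni j} A_{i i_2 \dots i_m}$ is nonnegative, yet the whole sum equals $0$; hence every summand must vanish. In other words, $A_{i i_2 \dots i_m} = 0$ whenever $i \in \alpha$ and at least one of the indices $i_2,\dots,i_m$ lies outside $\alpha$.

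Finally, I would check that $\mathcal{A}$ is reducible using this same index set $\alpha$. Let $i_1 \in \alpha$ and let $i_2,\dots,i_m \in \{1,\dots,n\}\setminus\alpha$ be arbitrary. Then $i_2$ is one of the indices $i_2,\dots,i_m$ and $i_2 \notin \alpha$, so by the previous step $A_{i_1 i_2 \dots i_m} = 0$. Thus $A_{i_1 \dots i_m} = 0$ for all $i_1 \in \alpha$ and $i_2,\dots,i_m \notin \alpha$, which is precisely the statement that $\mathcal{A}$ is reducible, contradicting the hypothesis. This establishes the proposition.

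There is no genuine obstacle here; the argument is a short unwinding of the two definitions. The only points that require a little care are: (i) matching the role of the index set $\alpha$ in the matrix notion of reducibility with its role in the tensor notion of reducibility (they coincide directly, with no need to pass to a complement); and (ii) the passage from ``the sum $R(\mathcal{A})_{ij}$ equals $0$'' to ``each entry $A_{i i_2 \dots i_m}$ occurring in that sum equals $0$'', which is exactly where the hypothesis $\mathcal{A} \geq 0$ is used and which would fail for an arbitrary real tensor.
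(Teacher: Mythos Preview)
Your argument is correct. The contrapositive you prove is exactly the right approach: weak reducibility gives a nonempty proper $\alpha$ with $R(\mathcal{A})_{ij}=0$ for $i\in\alpha$, $j\notin\alpha$; nonnegativity of $\mathcal{A}$ forces every summand $A_{ii_2\dots i_m}$ with $j\in\{i_2,\dots,i_m\}$ to vanish; and specializing to the case where \emph{all} of $i_2,\dots,i_m$ lie outside $\alpha$ gives reducibility of $\mathcal{A}$ with the same witness set $\alpha$. Your two caveats (i) and (ii) are the only places any care is needed, and you handled both.

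As for comparison with the paper: the paper does not supply its own proof of this proposition. It is stated as a known result, with citations to \cite[Lemma~2.1]{cheng} and \cite[Lemma~3.1]{Shmuel}, and no argument is given in the text. Your proof is the standard one and is presumably close to what those references contain.
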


The converse is true only for matrices, and not for higher order tensors. For $ x \in \mathbb{C}^n $ and a natural number $k$, the vector $x^{[k]}$  is the Hadamard power of $x$, i.e. $x^{[k]}_i = x^{k}_i$ for all $i$. For an $m$-order $n$-dimensional tensor $\mathcal{A}$ and for a vector $x \in \mathbb{C}^n$,  $\mathcal{A}x^{m-1}$ is the  vector in $\mathbb{C}^n$ defined by $(\mathcal{A}x^{m-1})_i = \sum_{i_2, \dots , i_m = 1}^{n}A_{ii_2\dots i_m} x_{i_2} \cdots x_{i_m}$.
Now we recall the definition of eigenvalue of a tensor.

\begin{defn}
{\rm
If a pair $(\lambda, x) \in \mathbb{C} \times \mathbb{C}^n\setminus\{0\}$ satisfies the equation
$\mathcal{A}x^{m-1}= \lambda x^{[m-1]}$, then $\lambda$ is called an \emph{eigenvalue} of $\mathcal{A}$ and $x$ is called an \emph{eigenvector} corresponding to the eigenvalue $\lambda$. If $(\lambda, x) \in \mathbb{R} \times \mathbb{R}^n\setminus \{0\}$, then $\lambda$ is called an \emph{$H$-eigenvalue} of $\mathcal{A}$. If $(\lambda, x) \in \mathbb{R} \times \mathbb{R}^n_{+} \setminus \{0\}$, then $\lambda$ is called an \emph{$H^+$-eigenvalue} of $\mathcal{A}$. If $(\lambda, x) \in \mathbb{R} \times \mathbb{R}^n_{++}$, then $\lambda$ is called an \emph{$H^{++}$-eigenvalue} of $\mathcal{A}$. The \emph{spectral radius} $\rho(\mathcal{A})$ of a tensor $\mathcal{A}$ is defined to be $\max \{|\lambda| : \lambda~ \mbox{is an eigenvalue of}~ \mathcal{A}\}$.
}
\end{defn}

The following  result is an analogue for tensors of  a special case of the spectral mapping theorem of matrices.

\begin{prop} \cite[Corollary 3]{qi1}\label{spec_poly}
Let $\mathcal{A}$ be an $m$-order $n$-dimensional tensor. Suppose that $\mathcal{B}= a(\mathcal{A} + b \mathcal{I})$, where $a$ and $b$ are two real numbers. Then $\mu$ is an eigenvalue of $\mathcal{B}$ if and only if $\mu = a(\lambda + b)$, where $\lambda$ is an eigenvalue of $\mathcal{A}$. In this case, they have the same eigenvectors.
\end{prop}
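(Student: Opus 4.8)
The plan is to reduce everything to the single identity $\mathcal{I}x^{m-1}=x^{[m-1]}$, valid for every $x\in\mathbb{C}^n$, together with the obvious fact that, for a fixed vector $x$, the assignment $\mathcal{C}\mapsto\mathcal{C}x^{m-1}$ is linear in $\mathcal{C}$. First I would record that for any $x\in\mathbb{C}^n$,
$$\mathcal{B}x^{m-1}=a(\mathcal{A}+b\mathcal{I})x^{m-1}=a\bigl(\mathcal{A}x^{m-1}+b\,\mathcal{I}x^{m-1}\bigr)=a\bigl(\mathcal{A}x^{m-1}+b\,x^{[m-1]}\bigr),$$
where the first equality is the definition of $\mathcal{B}$, the second follows coordinatewise from the formula $(\mathcal{C}x^{m-1})_i=\sum_{i_2,\dots,i_m}C_{ii_2\dots i_m}x_{i_2}\cdots x_{i_m}$, and the last uses that $I_{i_1\dots i_m}$ is nonzero only when all indices coincide, so that $(\mathcal{I}x^{m-1})_i=x_i^{m-1}=x_i^{[m-1]}$.

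For the ``if'' direction, assume $\mathcal{A}x^{m-1}=\lambda x^{[m-1]}$ for some $x\neq 0$. Plugging this into the displayed identity gives $\mathcal{B}x^{m-1}=a(\lambda+b)\,x^{[m-1]}$, so $\bigl(a(\lambda+b),x\bigr)$ is an eigenpair of $\mathcal{B}$, with the very same eigenvector. For the ``only if'' direction, assume $\mathcal{B}x^{m-1}=\mu x^{[m-1]}$ for some $x\neq 0$; when $a\neq 0$ I would divide the displayed identity by $a$ to obtain $\mathcal{A}x^{m-1}=(\mu/a-b)\,x^{[m-1]}$, so that $\lambda:=\mu/a-b$ is an eigenvalue of $\mathcal{A}$ with eigenvector $x$ and $\mu=a(\lambda+b)$, as required.

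I do not expect a genuine obstacle here; the entire content is the linearity of $\mathcal{A}x^{m-1}$ in $\mathcal{A}$ and the computation $\mathcal{I}x^{m-1}=x^{[m-1]}$. The only point deserving a separate line is the degenerate case $a=0$: then $\mathcal{B}$ is the zero tensor, so $\mu x^{[m-1]}=0$ forces $\mu=0$ (choose an index $i$ with $x_i\neq 0$), and since $\mathcal{A}$ has at least one (complex) eigenvalue $\lambda$ one still has $\mu=0=a(\lambda+b)$ — though in this corner the eigenvectors need not correspond, which is implicitly why the nondegenerate case $a\neq 0$ is the one intended in the last sentence of the statement. Keeping track of \emph{which} direction requires $a\neq 0$, and invoking nonemptiness of the spectrum for the $a=0$ remark, is the whole of the bookkeeping.
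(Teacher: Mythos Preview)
The paper does not supply a proof of this proposition; it is quoted verbatim from \cite[Corollary~3]{qi1} as a known preliminary result. Your argument is correct and is exactly the natural one: linearity of $\mathcal{C}\mapsto\mathcal{C}x^{m-1}$ in $\mathcal{C}$ together with $\mathcal{I}x^{m-1}=x^{[m-1]}$ gives the displayed identity, from which both directions are immediate when $a\neq 0$. Your handling of the degenerate case $a=0$ is also appropriate, including the caveat that the eigenvector correspondence breaks down there; the only step you might flag explicitly is that nonemptiness of the tensor spectrum (needed to produce some $\lambda$ when $a=0$) is itself a nontrivial fact from \cite{qi1}, established via the characteristic polynomial.
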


The next important result is a Perron-Frobenius Theorem for nonnegative tensors. It combines  \cite[Theorem 1.3]{cheng} and  \cite[Theorem 4.1]{Shmuel}.
\begin{prop}\label{perron_frob}
Let $\mathcal{A}$ be an $m$-order $n$-dimensional nonnegative tensor.
\begin{itemize}
\item[(a)] Then the spectral radius $\rho(\mathcal{A})$ is an $H^{+}$-eigenvalue of $\mathcal{A}$.
\item[(b)] If $\mathcal{A}$ is weakly irreducible, then $\rho(A)$ is an $H^{++}$-eigenvalue of $\mathcal{A}$ and no other eigenvalue has a positive eigenvector.
\end{itemize}
\end{prop}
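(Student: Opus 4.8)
The statement is the Perron--Frobenius theorem for nonnegative tensors, and the plan is to prove (a) by a Brouwer fixed-point argument combined with a perturbation, and (b) by a variational (Collatz--Wielandt) argument together with a classical Perron-type comparison. The relevant object throughout is the monotone, degree-one positively homogeneous map $T(x)=\left(\mathcal A x^{m-1}\right)^{[1/(m-1)]}$ on $\mathbb{R}^n_+$, where the nonnegative $(m-1)$-th root is taken entrywise.

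For (a), first assume $\mathcal A>0$. Then $x\mapsto T(x)/\|T(x)\|_1$ is a continuous self-map of the simplex $\Delta=\{x\ge 0:\sum_i x_i=1\}$ carrying $\Delta$ into its relative interior, so Brouwer gives $x^{\ast}>0$ with $\mathcal A (x^{\ast})^{m-1}=\lambda^{\ast}(x^{\ast})^{[m-1]}$ and $\lambda^{\ast}>0$. If $\mathcal A z^{m-1}=\mu z^{[m-1]}$ with $z\ne 0$, then taking absolute values entrywise and using $\mathcal A\ge 0$ gives $\mathcal A|z|^{m-1}\ge|\mu|\,|z|^{[m-1]}$; setting $t^{\ast}=\min\{t>0:|z|\le t x^{\ast}\}$ and choosing $i_0$ with $|z_{i_0}|=t^{\ast}x^{\ast}_{i_0}$, monotonicity of $T$ forces $|\mu|\le\lambda^{\ast}$, so $\lambda^{\ast}=\rho(\mathcal A)$. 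For general $\mathcal A\ge 0$, apply this to $\mathcal A_{\varepsilon}=\mathcal A+\varepsilon\mathcal J$ ($\mathcal J$ the all-ones tensor, $\varepsilon>0$), obtaining $x_{\varepsilon}>0$ in $\Delta$ with $\mathcal A_{\varepsilon}x_{\varepsilon}^{m-1}=\lambda_{\varepsilon}x_{\varepsilon}^{[m-1]}$ and $\lambda_{\varepsilon}=\rho(\mathcal A_{\varepsilon})$. Along a subsequence $\varepsilon\downarrow 0$ we get $x_{\varepsilon}\to x_0\in\Delta$ and $\lambda_{\varepsilon}\to\lambda_0\ge 0$, and passing to the limit in the eigenvalue equation yields $\mathcal A x_0^{m-1}=\lambda_0 x_0^{[m-1]}$ with $x_0\ge 0$, $x_0\ne 0$, so $\lambda_0\le\rho(\mathcal A)$. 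Conversely, any eigenvalue $\mu$ of $\mathcal A$ satisfies $\mathcal A_{\varepsilon}|z|^{m-1}\ge\mathcal A|z|^{m-1}\ge|\mu|\,|z|^{[m-1]}$, so the touching argument against $x_{\varepsilon}$ gives $|\mu|\le\lambda_{\varepsilon}$ for all $\varepsilon$, hence $|\mu|\le\lambda_0$; therefore $\rho(\mathcal A)=\lambda_0$ is an $H^+$-eigenvalue.

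For (b), let $\mathcal A$ be weakly irreducible, let $x_0$ be the nonnegative Perron eigenvector from (a), and put $\alpha=\{i:(x_0)_i>0\}\ne\emptyset$. The case $n=1$ is trivial, so let $n\ge 2$; since $R(\mathcal A)$ is then an irreducible matrix with no zero row, $(\mathcal A\mathbf 1^{m-1})_i>0$ for every $i$, whence $\rho(\mathcal A)>0$ by the Collatz--Wielandt lower bound (proved as in (a)). Suppose $\alpha$ were proper. Restricting the eigenvalue equation to indices in $\alpha$ shows $x_0|_\alpha>0$ is an $H^{++}$-eigenvector of the principal subtensor $\mathcal A[\alpha]$ for $\rho(\mathcal A)$, so $\rho(\mathcal A)\le\rho(\mathcal A[\alpha])\le\rho(\mathcal A)$ and hence $\rho(\mathcal A[\alpha])=\rho(\mathcal A)$. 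The contradiction must come from here, and this is exactly where weak irreducibility is used: the eigenvalue equation at the indices outside $\alpha$ does \emph{not} by itself exclude $\alpha$ from being a proper invariant set, so one has to invoke the \emph{maximality} of $\rho(\mathcal A)$, via the characterization $\rho(\mathcal A)=\sup\{\min_{i:x_i>0}(\mathcal A x^{m-1})_i/x_i^{m-1}:x\in\mathbb{R}^n_+\setminus\{0\}\}$ together with irreducibility of $R(\mathcal A)$, to conclude that a maximizer has full support --- this is precisely the Perron--Frobenius statement of the cited works, which I would reproduce. Granted $x_0>0$, the last assertion is easy: if $y>0$ satisfies $\mathcal A y^{m-1}=\mu y^{[m-1]}$, set $c=\max_i y_i/(x_0)_i$ and $c'=\min_i y_i/(x_0)_i$, so $c'x_0\le y\le c x_0$; applying the monotone map $x\mapsto\mathcal A x^{m-1}$ and evaluating at coordinates where these are tight gives $\mu\le\rho(\mathcal A)$ and $\mu\ge\rho(\mathcal A)$, so $\mu=\rho(\mathcal A)$, i.e.\ no eigenvalue other than $\rho(\mathcal A)$ has a positive eigenvector.

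The main obstacle is the strict positivity of the Perron eigenvector in (b). Unlike the matrix case, irreducibility of $R(\mathcal A)$ does not propagate through the nonlinear relation $\mathcal A x^{m-1}=\rho(\mathcal A)x^{[m-1]}$ to force the support of a nonnegative eigenvector to be all of $\{1,\dots,n\}$ --- a weakly irreducible tensor may well have nonnegative eigenvectors with proper support, just not for the eigenvalue $\rho(\mathcal A)$ --- so one genuinely needs the variational characterization of $\rho(\mathcal A)$ rather than a local graph argument. Everything else (part (a), the reduction in (b) to a spectral-radius comparison, and the uniqueness argument) is standard Perron--Frobenius machinery transported to the map $T$.
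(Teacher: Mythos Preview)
The paper does not prove this proposition at all: it is quoted, without proof, as a combination of \cite[Theorem 1.3]{cheng} and \cite[Theorem 4.1]{Shmuel}, and is then used as input to the paper's own results (Lemmas \ref{iterate}--\ref{iterate1}, Theorems \ref{prin-str-mono}--\ref{irre_mono}). So there is no ``paper's own proof'' to compare against; your proposal goes well beyond what the paper attempts.

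On the substance of your argument: part (a) is the standard Brouwer-plus-perturbation route and is fine. Part (b), however, has a genuine gap that you yourself flag. After obtaining a nonnegative eigenvector $x_0$ with support $\alpha$, the eigenvalue equation at indices $i\notin\alpha$ only gives $A_{ii_2\dots i_m}=0$ whenever \emph{all} of $i_2,\dots,i_m$ lie in $\alpha$; this does \emph{not} force $R(\mathcal A)_{ij}=0$ for $i\notin\alpha$, $j\in\alpha$, so weak irreducibility is not directly contradicted. Your fallback --- invoke the Collatz--Wielandt identity and show a maximizer has full support --- is precisely the content of the cited theorems, and in the paper's logical order the Collatz--Wielandt characterization (Proposition \ref{fried_colla}) is itself stated \emph{after} and in the same framework as Proposition \ref{perron_frob}. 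So as written, part (b) is either incomplete or circular relative to the paper's dependency structure. A self-contained route (as in \cite{Shmuel}) is to work with the shifted map associated to $\mathcal A+\mathcal I$ and use irreducibility of $R(\mathcal A)$ to show that iterating this map strictly enlarges the support of any nonnegative, non-positive vector, forcing the Brouwer fixed point into the interior of the simplex; your sketch stops short of this. The final uniqueness clause (no other eigenvalue has a positive eigenvector), via the $\max$/$\min$ ratio comparison against $x_0$, is correct once $x_0>0$ is established.
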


The following  is a Collatz-Wielandt type result for weakly irreducible nonnegative tensors.

\begin{prop}\cite[Corollary 4.2]{Shmuel}, \cite[Theorem 4.1]{hu}\label{fried_colla}
Let $\mathcal{A}=(A_{i_1\dots i_m})$ be an $m$-order $n$-dimensional  weakly irreducible nonnegative tensor. Then the unique scalar $\lambda$, for which there is a vector $z \in \mathbb{R}^n_{++}$ with  $\mathcal{A}z^{m-1} = \lambda z^{[m-1]}$, satisfies:
\begin{center}
$\lambda = \sup_{x \in \mathbb{R}^n_{+}\setminus \{0\}} \min_{x_i>0} \frac{(\mathcal{A} x^{m-1})_i}{x^{m-1}_i} = \inf_{x \in \mathbb{R}^n_{++}} \max_{i} \frac{(\mathcal{A} x^{m-1})_i}{x^{m-1}_i} $.
\end{center}
\end{prop}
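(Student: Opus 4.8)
The plan is to derive both formulas from the existence of a strictly positive eigenvector, by a scaling argument of Collatz--Wielandt type. By Proposition~\ref{perron_frob}(b), $\rho(\mathcal{A})$ is an $H^{++}$-eigenvalue of $\mathcal{A}$, so there is some $z\in\mathbb{R}^n_{++}$ with $\mathcal{A}z^{m-1}=\rho(\mathcal{A})\,z^{[m-1]}$; by the same part of that proposition, no eigenvalue other than $\rho(\mathcal{A})$ has a positive eigenvector, so $\lambda=\rho(\mathcal{A})$ really is the unique scalar described in the statement. Fix such a $z$. I will use two elementary properties of the map $x\mapsto\mathcal{A}x^{m-1}$ on $\mathbb{R}^n_+$, both immediate from the defining sum together with $\mathcal{A}\ge0$: it is positively homogeneous of degree $m-1$, i.e. $\mathcal{A}(tx)^{m-1}=t^{m-1}\mathcal{A}x^{m-1}$ for $t\ge0$; and it is monotone, i.e. $0\le u\le v$ implies $\mathcal{A}u^{m-1}\le\mathcal{A}v^{m-1}$.

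For the supremum formula, substituting $z$ gives $\min_{z_i>0}(\mathcal{A}z^{m-1})_i/z_i^{m-1}=\lambda$, so the supremum is at least $\lambda$; it remains to show $\min_{x_i>0}(\mathcal{A}x^{m-1})_i/x_i^{m-1}\le\lambda$ for every $x\in\mathbb{R}^n_+\setminus\{0\}$. Assume for contradiction that for some such $x$ this minimum equals $c>\lambda$, so that $(\mathcal{A}x^{m-1})_i\ge c\,x_i^{m-1}$ whenever $x_i>0$. Put $s_0=\max_i x_i/z_i$, which is well defined and strictly positive because $z>0$ and $x\ne0$, and let $j$ be an index attaining it; then $x_j=s_0z_j>0$ and $x\le s_0z$ coordinatewise. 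Monotonicity and homogeneity give $\mathcal{A}x^{m-1}\le\mathcal{A}(s_0z)^{m-1}=s_0^{m-1}\mathcal{A}z^{m-1}=\lambda\,s_0^{m-1}z^{[m-1]}$, and reading off the $j$-th coordinate yields $c\,x_j^{m-1}\le(\mathcal{A}x^{m-1})_j\le\lambda(s_0z_j)^{m-1}=\lambda\,x_j^{m-1}$, hence $c\le\lambda$, a contradiction. So the supremum equals $\lambda$ and is attained at $z$.

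The infimum formula is symmetric, now scaling $z$ up to $x$. Substituting $z$ gives $\max_i(\mathcal{A}z^{m-1})_i/z_i^{m-1}=\lambda$, so the infimum is at most $\lambda$. If some $x\in\mathbb{R}^n_{++}$ had $\max_i(\mathcal{A}x^{m-1})_i/x_i^{m-1}<\lambda$, i.e. $\mathcal{A}x^{m-1}<\lambda x^{[m-1]}$ coordinatewise, set $t_0=\max_i z_i/x_i>0$ and let $j$ satisfy $z_j=t_0x_j$; then $z\le t_0x$, so by monotonicity and homogeneity $\lambda z^{[m-1]}=\mathcal{A}z^{m-1}\le\mathcal{A}(t_0x)^{m-1}=t_0^{m-1}\mathcal{A}x^{m-1}<\lambda\,t_0^{m-1}x^{[m-1]}=\lambda(t_0x)^{[m-1]}$, and the $j$-th coordinate gives $\lambda z_j^{m-1}<\lambda z_j^{m-1}$, a contradiction. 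Hence the infimum equals $\lambda$ as well.

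I do not anticipate a genuine obstacle: this is essentially the classical Collatz--Wielandt proof for irreducible nonnegative matrices, with the matrix action replaced by $x\mapsto\mathcal{A}x^{m-1}$ and with the left Perron eigenvector replaced by the scaling device, which needs only the right positive eigenvector $z$. The one point deserving care is the supremum formula for boundary vectors $x$ having some zero coordinates: the comparison must then be made at an index $j$ with $x_j>0$, and this is automatic since $s_0=\max_i x_i/z_i>0$ forces $x_j>0$ at a maximizing index. The remaining ingredients --- homogeneity, monotonicity, and the identification $\lambda=\rho(\mathcal{A})$ --- are routine consequences of Proposition~\ref{perron_frob}.
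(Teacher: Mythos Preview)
Your argument is correct. The paper does not give its own proof of this proposition: it is quoted as a known result from \cite{Shmuel} and \cite{hu}, so there is nothing to compare against directly. Your proof is the classical Collatz--Wielandt scaling argument, transplanted to the tensor setting using only the existence of a strictly positive eigenvector (Proposition~\ref{perron_frob}(b)) together with the homogeneity and monotonicity of $x\mapsto\mathcal{A}x^{m-1}$ on $\mathbb{R}^n_+$. Both halves are handled cleanly; in particular, your remark that the maximizing index $j$ in $s_0=\max_i x_i/z_i$ automatically has $x_j>0$ correctly disposes of the boundary case in the supremum formula. This self-contained approach is arguably more elementary than the original sources, which develop the min--max characterization via a broader variational framework, whereas you need only the single positive eigenvector $z$ as a comparison vector.
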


The following result deals with the monotonicity of the spectral radius of nonnegative tensors.

\begin{prop}\cite[Lemma 3.5]{ya-ya1}  \cite[Theorem 2.20]{ya-ya2} \label{mono}
Let $\mathcal{A}$ and $\mathcal{B}$ be two $m$-order $n$-dimensional tensors such that $0 \leq \mathcal{A} \leq \mathcal{B}$. Then
\begin{itemize}
\item[(a)] $\rho(\mathcal{A}) \leq \rho(\mathcal{B})$.
\item[(b)] if $\mathcal{B}$ is irreducible and $\mathcal{A} \neq \mathcal{B}$, then  $\rho(\mathcal{A}) < \rho(\mathcal{B})$.
\end{itemize}
\end{prop}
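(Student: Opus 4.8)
The plan is to deduce (a) from the Collatz--Wielandt formula of Proposition~\ref{fried_colla} by a perturbation, and to prove (b) by contradiction, the substantive point being a sub-invariance lemma for irreducible nonnegative tensors. For (a), let $\mathcal{E}$ be the $m$-order $n$-dimensional tensor all of whose entries equal $1$, and for $\varepsilon>0$ set $\mathcal{B}_\varepsilon:=\mathcal{B}+\varepsilon\mathcal{E}$; being positive, $\mathcal{B}_\varepsilon$ is irreducible, hence weakly irreducible by Proposition~\ref{irr_imply_weakirr}, so by Propositions~\ref{perron_frob}(b) and~\ref{fried_colla} one has $\rho(\mathcal{B}_\varepsilon)=\sup_{x\in\mathbb{R}^n_+\setminus\{0\}}\min_{x_i>0}(\mathcal{B}_\varepsilon x^{m-1})_i/x_i^{m-1}$. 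By Proposition~\ref{perron_frob}(a) pick $u\in\mathbb{R}^n_+\setminus\{0\}$ with $\mathcal{A}u^{m-1}=\rho(\mathcal{A})u^{[m-1]}$; since $0\le\mathcal{A}\le\mathcal{B}_\varepsilon$ and $u\ge0$ we get $(\mathcal{B}_\varepsilon u^{m-1})_i\ge(\mathcal{A}u^{m-1})_i=\rho(\mathcal{A})u_i^{m-1}$ for every $i$, so the supremum above is $\ge\rho(\mathcal{A})$, i.e. $\rho(\mathcal{B}_\varepsilon)\ge\rho(\mathcal{A})$. As the eigenvalues of a tensor are the roots of its characteristic polynomial --- a polynomial of fixed degree whose coefficients are polynomials in the entries --- the spectral radius is continuous in the entries, so letting $\varepsilon\downarrow 0$ gives $\rho(\mathcal{B})\ge\rho(\mathcal{A})$.

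For (b), the case $n=1$ is immediate, so assume $n\ge2$, and suppose for contradiction that $\rho(\mathcal{A})=\rho(\mathcal{B})=:\rho$. Using Proposition~\ref{perron_frob}(a), pick $u\in\mathbb{R}^n_+\setminus\{0\}$ with $\mathcal{A}u^{m-1}=\rho u^{[m-1]}$; then $\mathcal{B}u^{m-1}\ge\mathcal{A}u^{m-1}=\rho u^{[m-1]}$. The key step is the lemma: \emph{if $\mathcal{B}$ is an irreducible nonnegative tensor and $w\in\mathbb{R}^n_+\setminus\{0\}$ satisfies $\mathcal{B}w^{m-1}\ge\rho(\mathcal{B})w^{[m-1]}$, then $w\in\mathbb{R}^n_{++}$ and $\mathcal{B}w^{m-1}=\rho(\mathcal{B})w^{[m-1]}$.} Granting it (with $w=u$), we have $u>0$ and $\mathcal{B}u^{m-1}=\rho u^{[m-1]}=\mathcal{A}u^{m-1}$, so $(\mathcal{B}-\mathcal{A})u^{m-1}=0$; since $\mathcal{B}-\mathcal{A}\ge0$ and $u>0$, every entry of $\mathcal{B}-\mathcal{A}$ must vanish, i.e. $\mathcal{A}=\mathcal{B}$, contradicting $\mathcal{A}\neq\mathcal{B}$. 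Hence $\rho(\mathcal{A})<\rho(\mathcal{B})$. I regard isolating and proving the lemma as the main obstacle: it is exactly what forces the argument to go through $\mathcal{B}$ rather than symmetrically through $\mathcal{A}$, which need not be (weakly) irreducible.

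To prove the lemma, put $\rho=\rho(\mathcal{B})$ and, via Proposition~\ref{perron_frob}(b) (an irreducible tensor being weakly irreducible), fix $v\in\mathbb{R}^n_{++}$ with $\mathcal{B}v^{m-1}=\rho v^{[m-1]}$. Let $c:=\max_{1\le j\le n}w_j/v_j$, so that $c>0$, $0\le w\le cv$ coordinatewise, and $E:=\{j:w_j=cv_j\}$ is nonempty. If $E=\{1,\dots,n\}$ then $w=cv$, whence $\mathcal{B}w^{m-1}=c^{m-1}\mathcal{B}v^{m-1}=\rho w^{[m-1]}$ and $w>0$, as claimed. Otherwise $E$ is a nonempty proper subset of $\{1,\dots,n\}$. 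For each $i\in E$, the hypothesis gives $(\mathcal{B}w^{m-1})_i\ge\rho w_i^{m-1}=c^{m-1}\rho v_i^{m-1}$, while $0\le w\le cv$ and $\mathcal{B}\ge0$ give $(\mathcal{B}w^{m-1})_i\le(\mathcal{B}(cv)^{m-1})_i=c^{m-1}\rho v_i^{m-1}$; so these coincide, i.e. $\sum_{i_2,\dots,i_m}B_{i i_2\dots i_m}\bigl((cv)_{i_2}\cdots(cv)_{i_m}-w_{i_2}\cdots w_{i_m}\bigr)=0$. Each summand is nonnegative, hence zero; for a term with $B_{i i_2\dots i_m}>0$ this forces $(cv)_{i_2}\cdots(cv)_{i_m}=w_{i_2}\cdots w_{i_m}$ with $0\le w_{i_\ell}\le(cv)_{i_\ell}$, so $w_{i_\ell}=(cv)_{i_\ell}$ --- that is, $i_\ell\in E$ --- for every $\ell=2,\dots,m$. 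Thus $B_{i_1 i_2\dots i_m}=0$ whenever $i_1\in E$ and $i_2,\dots,i_m\notin E$, which says precisely that $\mathcal{B}$ is reducible with index set $E$ --- contradicting irreducibility. Therefore $E=\{1,\dots,n\}$, and the lemma follows.
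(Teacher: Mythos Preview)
Your proof is correct, but note that the paper does not actually prove this proposition: it is a result quoted from \cite{ya-ya1,ya-ya2} in the preliminaries (Section~\ref{definitions}), so there is no ``paper's own proof'' to compare against directly.

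Your argument is sound and self-contained. The key lemma you isolate --- that for an irreducible nonnegative $\mathcal{B}$, any $w\in\mathbb{R}^n_+\setminus\{0\}$ with $\mathcal{B}w^{m-1}\ge\rho(\mathcal{B})w^{[m-1]}$ is a \emph{positive} Perron eigenvector --- is precisely Proposition~\ref{ya-ya-irre} (also quoted without proof), strengthened by the positivity conclusion that you obtain via the $c=\max_j w_j/v_j$ comparison with the Perron vector $v$. So you have in effect supplied proofs for two of the paper's cited preliminaries at once. The perturbation-plus-continuity argument for (a) is standard and fine.

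It is worth comparing your approach to the paper's proof of the harder weakly irreducible analogue, Theorem~\ref{irre_mono}. In its Case~1 (where $\mathcal{A}$ is weakly irreducible, so its Perron eigenvector is already positive) the paper follows exactly your strategy, invoking Lemma~\ref{iterate} in place of your lemma. But Lemma~\ref{iterate} requires $y\in\mathbb{R}^n_{++}$ from the outset, because weak irreducibility alone does not force a nonnegative sub-invariant vector to be positive; hence the paper must treat weakly reducible $\mathcal{A}$ separately (Case~2), via the Frobenius normal form and Theorem~\ref{prin-str-mono}. Your lemma, by contrast, exploits full irreducibility to promote $w\ge0$ to $w>0$ directly, which is why you need no case split. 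That is the main structural difference between your argument and the paper's treatment of its own, more general, result.
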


The following result establishes  a relation between the spectral radius of a tensor and its principal subtensors.

\begin{prop} \cite[Lemma 2.2]{hu}\label{spec_sub}
Let  $\mathcal{A}[\alpha]$ be an $m$-order $r$-dimensional principal subtensor of an $m$-order $n $-dimensional nonnegative tensor $\mathcal{A}$. Then $\rho(\mathcal{A}[\alpha]) \leq \rho(\mathcal{A})$.
\end{prop}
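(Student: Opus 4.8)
The plan is to sandwich $\rho(\mathcal{A}[\alpha])$ between the spectral radii of $\mathcal{A}$ and of an auxiliary $n$-dimensional tensor that captures $\mathcal{A}[\alpha]$ as a ``block''. Define $\hat{\mathcal{A}}=(\hat A_{i_1\dots i_m})$, the $m$-order $n$-dimensional tensor with $\hat A_{i_1\dots i_m}=A_{i_1\dots i_m}$ when $i_1,\dots,i_m\in\alpha$ and $\hat A_{i_1\dots i_m}=0$ otherwise. Since $\mathcal{A}\ge 0$, we have $0\le\hat{\mathcal{A}}\le\mathcal{A}$ entrywise, so Proposition~\ref{mono}(a) gives $\rho(\hat{\mathcal{A}})\le\rho(\mathcal{A})$. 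Thus it suffices to show $\rho(\mathcal{A}[\alpha])\le\rho(\hat{\mathcal{A}})$, and the degenerate cases $\alpha=\{1,\dots,n\}$ and $\alpha=\emptyset$ are trivial.

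For the remaining inequality I would apply Proposition~\ref{perron_frob}(a) to the nonnegative tensor $\mathcal{A}[\alpha]$ to obtain a vector $y\in\mathbb{R}^r_+\setminus\{0\}$ (indexed by $\alpha$) with $\mathcal{A}[\alpha]y^{m-1}=\rho(\mathcal{A}[\alpha])\,y^{[m-1]}$, and then pad it with zeros: let $\tilde y\in\mathbb{R}^n$ agree with $y$ on $\alpha$ and vanish off $\alpha$, so $\tilde y\ne 0$. The key computation is that $\tilde y$ is an honest eigenvector of $\hat{\mathcal{A}}$ for the value $\rho(\mathcal{A}[\alpha])$: for $i\in\alpha$, the vanishing of $\tilde y$ off $\alpha$ collapses $(\hat{\mathcal{A}}\tilde y^{m-1})_i$ to $(\mathcal{A}[\alpha]y^{m-1})_i=\rho(\mathcal{A}[\alpha])\,y_i^{m-1}=\rho(\mathcal{A}[\alpha])\,\tilde y_i^{m-1}$; while for $i\notin\alpha$ both sides of the eigenvalue equation are zero, the left because every entry $\hat A_{ii_2\dots i_m}$ with $i\notin\alpha$ was set to zero, the right because $\tilde y_i=0$ and $m\ge 2$. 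Hence $\rho(\mathcal{A}[\alpha])$ is an eigenvalue of $\hat{\mathcal{A}}$, and since a spectral radius is nonnegative this yields $\rho(\mathcal{A}[\alpha])\le\rho(\hat{\mathcal{A}})\le\rho(\mathcal{A})$, as required.

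The only delicate point -- and what I see as the main obstacle -- is resisting the temptation to argue with $\mathcal{A}$ directly: plugging the padded vector $\tilde y$ into $\mathcal{A}$ itself only gives the sub-eigenvector inequality $\mathcal{A}\tilde y^{m-1}\ge\rho(\mathcal{A}[\alpha])\,\tilde y^{[m-1]}$, and converting such a Collatz--Wielandt inequality into a lower bound on $\rho(\mathcal{A})$ would require a Collatz--Wielandt estimate for \emph{arbitrary} nonnegative tensors, whereas the version available here (Proposition~\ref{fried_colla}) assumes weak irreducibility, which we cannot presume for $\mathcal{A}$. Interposing the reducible tensor $\hat{\mathcal{A}}$, for which the padded vector is an \emph{exact} eigenvector, is precisely what sidesteps this, at the cost of a single application of monotonicity. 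An alternative route -- perturbing $\mathcal{A}$ to the positive (hence weakly irreducible) tensor $\mathcal{A}+\varepsilon\mathcal{E}$, applying Proposition~\ref{fried_colla} to the zero-padded Perron vector of $(\mathcal{A}+\varepsilon\mathcal{E})[\alpha]$, and letting $\varepsilon\to 0^+$ -- also works, but it needs continuity of the spectral radius in the tensor entries, which is not among the results quoted above; I would therefore prefer the argument via $\hat{\mathcal{A}}$.
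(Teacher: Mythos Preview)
Your argument is correct. Note, however, that the paper does not supply its own proof of Proposition~\ref{spec_sub}: the result is quoted from \cite[Lemma~2.2]{hu} as part of the preliminaries in Section~\ref{definitions}, so there is no in-paper proof to compare against. Your construction of the auxiliary tensor $\hat{\mathcal{A}}$ with $0\le\hat{\mathcal{A}}\le\mathcal{A}$, together with the observation that the zero-padded Perron vector of $\mathcal{A}[\alpha]$ is an \emph{exact} eigenvector of $\hat{\mathcal{A}}$, is a clean way to reduce everything to a single application of Proposition~\ref{mono}(a); it neatly sidesteps the weak-irreducibility hypothesis of Proposition~\ref{fried_colla} that you correctly identify as the obstruction to a direct Collatz--Wielandt argument with $\mathcal{A}$ itself.
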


The following result is the analogue of the Frobenius normal form of nonnegative matrices to weak irreducible nonnegative tensors.

\begin{prop} \cite[Theorem 5.7]{hu} \label{weak_irr_block}
Let $\mathcal{A}$ be an $m$-order $n$-dimensional nonnegative tensor. If $\mathcal{A}$ is weakly reducible, then there exists a partition $\{\alpha_1, \dots, \alpha_k\}$ of $\{1 , \dots, n\}$ such that every tensor in $\{\mathcal{A}[\alpha_j] : j \in \{1, \dots , k\}\}$ is weakly irreducible and $A_{ri_2 \dots i_m} = 0$ for all $r \in \alpha_p$ , $i_j \in \alpha_q$ for some $j \in \{2, \dots, m\}$ and $p > q$.
\end{prop}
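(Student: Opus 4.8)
The plan is to reduce everything to the Frobenius normal form of the nonnegative matrix $R(\mathcal{A})$. By definition, $\mathcal{A}$ being weakly reducible means precisely that $R(\mathcal{A})$ is a reducible nonnegative matrix. By the classical Frobenius normal form, such a matrix is permutation similar to a block upper triangular matrix whose diagonal blocks are irreducible and whose number $k$ of blocks is at least $2$; equivalently, the vertex set $\{1,\dots,n\}$ of the digraph on $\{1,\dots,n\}$ having an arc $r\to s$ exactly when $R(\mathcal{A})_{rs}>0$ splits into its strongly connected components $\alpha_1,\dots,\alpha_k$, and I order them by any topological order of the condensation, so that $R(\mathcal{A})_{rs}=0$ whenever $r\in\alpha_p$, $s\in\alpha_q$ and $p>q$, while each diagonal block $R(\mathcal{A})[\alpha_j]$ is irreducible. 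This $\{\alpha_1,\dots,\alpha_k\}$ is the partition I would use; note $k\ge 2$ is exactly what lets the subtensors be proper (hence possibly weakly irreducible) pieces of $\mathcal{A}$.

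Granting this, the triangularity assertion for $\mathcal{A}$ itself is immediate. Since $R(\mathcal{A})_{rs}=\sum_{\{i_2,\dots,i_m\}\ni s}A_{ri_2\dots i_m}$ is a sum of nonnegative terms, for every tuple $(i_2,\dots,i_m)$ in which $s$ occurs we have $0\le A_{ri_2\dots i_m}\le R(\mathcal{A})_{rs}$. Hence if $r\in\alpha_p$ and some $i_j\in\alpha_q$ with $p>q$, then $R(\mathcal{A})_{ri_j}=0$ forces $A_{ri_2\dots i_m}=0$, which is exactly what is required.

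It remains to show that each $\mathcal{A}[\alpha_j]$ is weakly irreducible, i.e.\ that $R(\mathcal{A}[\alpha_j])$ is irreducible, and this is the step I expect to be the real work. The natural route is to identify $R(\mathcal{A}[\alpha_j])$ with the diagonal block $R(\mathcal{A})[\alpha_j]$, which we already know is irreducible. One inequality is free: for $r,s\in\alpha_j$, $R(\mathcal{A}[\alpha_j])_{rs}$ is obtained from $R(\mathcal{A})_{rs}$ by dropping the terms $A_{ri_2\dots i_m}$ having some index outside $\alpha_j$, so $R(\mathcal{A}[\alpha_j])\le R(\mathcal{A})[\alpha_j]$ entrywise. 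The substantive point is the reverse inequality, equivalently the claim that for $r\in\alpha_j$ no positive entry $A_{ri_2\dots i_m}$ has an index $i_\ell\notin\alpha_j$: the triangularity just proved already excludes $i_\ell$ in an earlier block $\alpha_q$ ($q<j$), and the main obstacle is to exclude $i_\ell$ in a later block, which forces one to exploit the full block structure of the condensation (and, where available, symmetry of $\mathcal{A}$) rather than merely that $\alpha_j$ is a strongly connected component of the pattern digraph. Once this identification is established, $R(\mathcal{A}[\alpha_j])=R(\mathcal{A})[\alpha_j]$ is irreducible, so $\mathcal{A}[\alpha_j]$ is weakly irreducible and the proof is complete; everything besides this last point is bookkeeping around the Frobenius normal form of $R(\mathcal{A})$.
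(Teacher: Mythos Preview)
First, note that the paper does not give its own proof of this proposition: it is quoted as \cite[Theorem~5.7]{hu} and used as a black box. So there is no ``paper's proof'' to compare against, only the question of whether your argument stands on its own.

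Your reduction to the Frobenius normal form of $R(\mathcal{A})$ is natural, and the triangularity part is fine: if $R(\mathcal{A})_{rs}=0$ for $r\in\alpha_p$, $s\in\alpha_q$, $p>q$, then every entry $A_{ri_2\dots i_m}$ with $s\in\{i_2,\dots,i_m\}$ vanishes. The problem is the other half. You want $R(\mathcal{A}[\alpha_j])$ to coincide (at least in zero pattern) with the irreducible diagonal block $R(\mathcal{A})[\alpha_j]$, and you correctly isolate the obstruction: a positive entry $A_{ri_2\dots i_m}$ with $r\in\alpha_j$, some $i_\ell\in\alpha_j$, and another index $i_{\ell'}$ in a \emph{later} block $\alpha_q$ ($q>j$) contributes to $R(\mathcal{A})[\alpha_j]_{r,i_\ell}$ but is thrown away in $R(\mathcal{A}[\alpha_j])_{r,i_\ell}$. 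You describe this as ``the main obstacle'' and gesture at ``the full block structure of the condensation'', but there is no argument here, and in fact the desired identification is false.

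Concretely, take $m=n=3$ and let the only nonzero entries of $\mathcal{A}$ be $A_{111}=A_{123}=A_{211}=A_{222}=A_{333}=1$. Then
\[
R(\mathcal{A})=\begin{pmatrix}1&1&1\\ 1&1&0\\ 0&0&1\end{pmatrix},
\]
whose Frobenius blocks are $\alpha_1=\{1,2\}$, $\alpha_2=\{3\}$, with $R(\mathcal{A})[\alpha_1]=\begin{pmatrix}1&1\\1&1\end{pmatrix}$ irreducible. But $\mathcal{A}[\alpha_1]$ has only $A_{111},A_{211},A_{222}$ nonzero, so
\[
R(\mathcal{A}[\alpha_1])=\begin{pmatrix}1&0\\ 1&1\end{pmatrix},
\]
which is reducible; the edge $1\to 2$ in $R(\mathcal{A})$ came solely from $A_{123}$, which has an index in the later block $\alpha_2$. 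Thus the Frobenius partition of $R(\mathcal{A})$ does \emph{not} in general yield weakly irreducible subtensors $\mathcal{A}[\alpha_j]$, and the identification $R(\mathcal{A}[\alpha_j])=R(\mathcal{A})[\alpha_j]$ that your last paragraph aims for simply does not hold. One needs a genuinely different construction of the partition (for instance an iterative refinement that, at each stage, passes to the associated matrix of the \emph{subtensor} rather than restricting $R(\mathcal{A})$), together with an argument that the successive splittings remain compatible with the global lower--triangular vanishing; none of this is supplied. As it stands the proposal proves only the triangularity clause and leaves the weak irreducibility clause with a gap that cannot be closed along the line you suggest.
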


The next result establishes the relation between  the  spectral radius of a nonnegative tensor and that of its weakly irreducible subtensors considered as in the above theorem.
\begin{prop}\cite[Theorem 5.8]{hu}\label{weak_irr_block_spec}
Let $\mathcal{A}$ be an $m$-order $n$-dimensional weakly reducible nonnegative, and let $\{\alpha_1, \dots , \alpha_k\}$ be a partition of $\{1, \dots, n\}$ determined by Proposition \ref{weak_irr_block}. Then $\rho(\mathcal{A}) = \rho(\mathcal{A}[\alpha_p])$ for some $p \in \{1, \dots, k\}$.
\end{prop}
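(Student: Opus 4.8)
The plan is to establish the two inequalities $\rho(\mathcal{A}[\alpha_j]) \le \rho(\mathcal{A})$ and $\rho(\mathcal{A}) \le \rho(\mathcal{A}[\alpha_p])$ for one suitably chosen block $\alpha_p$. The first holds for \emph{every} $j$ by Proposition \ref{spec_sub}, so the whole problem reduces to exhibiting a block $\alpha_p$ for which $\rho(\mathcal{A})$ is itself an eigenvalue of $\mathcal{A}[\alpha_p]$; then $\rho(\mathcal{A}) = |\rho(\mathcal{A})| \le \rho(\mathcal{A}[\alpha_p]) \le \rho(\mathcal{A})$ forces equality.

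To find that block, I would start from Proposition \ref{perron_frob}(a): there is a vector $x \in \mathbb{R}^n_+ \setminus \{0\}$ with $\mathcal{A}x^{m-1} = \rho(\mathcal{A})\, x^{[m-1]}$. Let $S = \{i : x_i > 0\}$ be its support and let $p$ be the \emph{largest} index with $\alpha_p \cap S \ne \emptyset$ (which exists since $x \ne 0$); let $y \in \mathbb{R}^{|\alpha_p|}_+ \setminus \{0\}$ denote the restriction of $x$ to the coordinates in $\alpha_p$. The crucial claim is that $(\mathcal{A}x^{m-1})_r = (\mathcal{A}[\alpha_p]\, y^{m-1})_r$ for every $r \in \alpha_p$. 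Indeed, a summand $A_{r i_2 \dots i_m}\, x_{i_2}\cdots x_{i_m}$ of $(\mathcal{A}x^{m-1})_r$ can be nonzero only if, on one hand, $A_{r i_2\dots i_m} \ne 0$, which by the zero pattern of Proposition \ref{weak_irr_block} applied to $r \in \alpha_p$ forces every $i_j$ ($j \ge 2$) to lie in a block $\alpha_q$ with $q \ge p$; and, on the other hand, $x_{i_j} > 0$ for every $j$, i.e.\ $i_j \in S$, which by the maximality of $p$ forces $i_j$ to lie in a block $\alpha_q$ with $q \le p$. Hence every $i_j \in \alpha_p$, so the sum collapses to a sum over indices in $\alpha_p$, and since $y$ agrees with $x$ on $\alpha_p$ this is exactly $(\mathcal{A}[\alpha_p]\, y^{m-1})_r$.

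With this in hand, for every $r \in \alpha_p$ the eigenvalue equation yields $(\mathcal{A}[\alpha_p]\, y^{m-1})_r = (\mathcal{A}x^{m-1})_r = \rho(\mathcal{A})\, x_r^{m-1} = \rho(\mathcal{A})\, y_r^{m-1}$, so that $\mathcal{A}[\alpha_p]\, y^{m-1} = \rho(\mathcal{A})\, y^{[m-1]}$ with $y \ne 0$. Thus $\rho(\mathcal{A})$ is an eigenvalue of the nonnegative tensor $\mathcal{A}[\alpha_p]$, and since $\rho(\mathcal{A}) \ge 0$ we get $\rho(\mathcal{A}) = |\rho(\mathcal{A})| \le \rho(\mathcal{A}[\alpha_p])$; combined with Proposition \ref{spec_sub} this gives $\rho(\mathcal{A}) = \rho(\mathcal{A}[\alpha_p])$, as desired.

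The delicate point — the only real obstacle — is the bookkeeping in the crucial claim: one must read the \emph{one-sided} zero pattern of Proposition \ref{weak_irr_block} correctly and pair it with the choice of $p$ as the largest block index meeting the support, so that the constraints ``$q \ge p$'' (coming from the tensor entries) and ``$q \le p$'' (coming from the support) together pin all column indices to $\alpha_p$. One should also note that the reduced eigenvalue equation must be checked for \emph{all} $r \in \alpha_p$, including indices $r$ with $x_r = 0$, where both sides vanish. Note that weak irreducibility of the subtensors furnished by Proposition \ref{weak_irr_block} is not needed here, only the block structure; alternatively one could run a similar argument through the Collatz--Wielandt characterization of Proposition \ref{fried_colla}, but the direct eigenvector computation above seems cleanest.
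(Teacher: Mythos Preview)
The paper does not supply its own proof of this proposition; it is quoted verbatim from \cite[Theorem 5.8]{hu} and used as a black box, so there is nothing to compare against on the paper's side.

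Your argument is correct. The key idea---taking a nonnegative eigenvector $x$ for $\rho(\mathcal{A})$, letting $p$ be the \emph{largest} block index meeting the support of $x$, and then showing that the restriction $y=x[\alpha_p]$ is an eigenvector of $\mathcal{A}[\alpha_p]$ for the same eigenvalue---is exactly the right one. Your reading of the zero pattern in Proposition~\ref{weak_irr_block} is accurate: for $r\in\alpha_p$ a nonzero entry $A_{ri_2\dots i_m}$ forces every $i_j$ into some $\alpha_q$ with $q\ge p$, while $x_{i_j}>0$ forces $q\le p$ by the maximality of $p$; together these pin all column indices to $\alpha_p$, and the eigenvalue equation descends to the subtensor. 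Combined with Proposition~\ref{spec_sub} this yields the claimed equality. Your side remarks are also on point: the argument uses only the block-triangular zero pattern, not the weak irreducibility of the diagonal blocks, and the verification at indices $r\in\alpha_p$ with $x_r=0$ is indeed trivial since both sides of the reduced equation vanish there.
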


There is an analogue of the Frobenius normal form also for reducible nonnegative tensors. However, there is no result like Proposition \ref{weak_irr_block_spec} in that case. We refer to  \cite[Example $5.5$]{hu}.

\section{Weakly irreducible nonnegative tensors}\label{weak}

In this section we first recall (Proposition \ref{ya-ya-irre}) a result about the dominant eigenvalue of an irreducible nonnegative tensor. We then prove a similar result for weakly irreducible nonnegative tensors, and use it in Theorem \ref{irre_mono} to establish a strict monotonicity of the spectral radius of weakly irreducible nonnegative tensors.

\begin{prop} \cite[Lemma 3.1]{ya-ya1}\label{ya-ya-irre}
Let $\mathcal{A}$ be an $m$-order $n$-dimensional irreducible nonnegative tensor. If there exists $y \in \mathbb{R}^n_{+}, y \neq 0$, such that $(\mathcal{A -\rho(A)\mathcal{I}})y^{m-1} \geq 0$, then $y$ is an eigenvector of $\mathcal{A}$ corresponding to $\rho{(\mathcal{A})}$.

\end{prop}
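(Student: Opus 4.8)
\noindent\emph{Proof idea.} Note first that $(\mathcal{A}-\rho(\mathcal{A})\mathcal{I})y^{m-1}=\mathcal{A}y^{m-1}-\rho(\mathcal{A})\,y^{[m-1]}$, since $(\mathcal{I}x^{m-1})_i=x_i^{m-1}$. The plan is to play $y$ off against a positive eigenvector of $\mathcal{A}$ for $\rho(\mathcal{A})$, using monotonicity of the map $x\mapsto\mathcal{A}x^{m-1}$ on $\mathbb{R}^n_+$. Since $\mathcal{A}$ is irreducible it is weakly irreducible by Proposition~\ref{irr_imply_weakirr}, so Proposition~\ref{perron_frob}(b) supplies a vector $z\in\mathbb{R}^n_{++}$ with $\mathcal{A}z^{m-1}=\rho z^{[m-1]}$, where $\rho:=\rho(\mathcal{A})$. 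Both the hypothesis and the conclusion are unaffected when $y$ is multiplied by a positive scalar, so I would normalize $y$ so that $0\le y\le z$ and $y_j=z_j$ for at least one $j$ (concretely, multiply $y$ by $\min\{z_i/y_i:\ y_i>0\}$). Put $\beta=\{i:\ y_i=z_i\}\neq\emptyset$. If $\beta=\{1,\dots,n\}$ then $y=z$ and there is nothing to prove, so suppose $\beta$ is a proper subset and aim for a contradiction.

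Let $w=\mathcal{A}y^{m-1}-\rho y^{[m-1]}$; the hypothesis says $w\ge 0$. Since $\mathcal{A}\ge 0$ and $0\le y\le z$ entrywise, $\mathcal{A}y^{m-1}\le\mathcal{A}z^{m-1}$; substituting $\mathcal{A}y^{m-1}=\rho y^{[m-1]}+w$ and $\mathcal{A}z^{m-1}=\rho z^{[m-1]}$, this reads $w\le\rho\bigl(z^{[m-1]}-y^{[m-1]}\bigr)$. For $i\in\beta$ the $i$-th coordinate of the right-hand side is $0$, forcing $w_i=0$ and hence $(\mathcal{A}z^{m-1})_i=(\mathcal{A}y^{m-1})_i$, i.e.\
\[
\sum_{i_2,\dots,i_m} A_{i i_2\dots i_m}\bigl(z_{i_2}\cdots z_{i_m}-y_{i_2}\cdots y_{i_m}\bigr)=0 .
\]
Every summand is nonnegative, so each vanishes; since $z>0$, whenever $A_{i i_2\dots i_m}>0$ we get $y_{i_2}\cdots y_{i_m}=z_{i_2}\cdots z_{i_m}>0$, and then, using $0\le y\le z$ once more, $y_{i_l}=z_{i_l}$ (that is, $i_l\in\beta$) for every $l\in\{2,\dots,m\}$. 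Consequently $A_{i_1 i_2\dots i_m}=0$ whenever $i_1\in\beta$ and $i_2,\dots,i_m\notin\beta$, so the nonempty proper set $\beta$ certifies that $\mathcal{A}$ is reducible, contradicting the irreducibility of $\mathcal{A}$. Hence $\beta=\{1,\dots,n\}$, $y=z>0$, and $\mathcal{A}y^{m-1}=\rho y^{[m-1]}$; undoing the normalization, the original $y$ satisfies the same identity, so it is an eigenvector of $\mathcal{A}$ corresponding to $\rho(\mathcal{A})$.

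I do not anticipate a serious technical obstacle here: the argument rests on the single idea of normalizing $y$ against the Perron eigenvector $z$, after which everything follows from monotonicity of $x\mapsto\mathcal{A}x^{m-1}$ together with the elementary fact that, for nonnegative vectors with $0\le y\le z$ and $z>0$, the equality $\prod_l y_{i_l}=\prod_l z_{i_l}$ forces $y_{i_l}=z_{i_l}$ coordinatewise. The only point where irreducibility --- not merely weak irreducibility --- is used is the last step, where the set $\beta$ on which $y$ and $z$ agree is turned into a certificate of reducibility; a genuinely different argument will be needed for the weakly irreducible analogue treated in the sequel.
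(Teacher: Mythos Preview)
The paper does not prove this proposition; it is quoted from \cite{ya-ya1} as background, so there is no ``paper's own proof'' to compare against. Your argument is correct and cleanly executed: normalizing $y$ against the Perron eigenvector $z$ and extracting a reducing set from the equality pattern $\beta=\{i:y_i=z_i\}$ is the standard route, and the product identity $\prod_l y_{i_l}=\prod_l z_{i_l}\Rightarrow y_{i_l}=z_{i_l}$ is handled properly.

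One remark on your closing comment. The paper's proof of the weakly irreducible analogue (Lemma~\ref{iterate}) does take a genuinely different path---it uses the Collatz--Wielandt formula to rule out strict inequality everywhere, then perturbs $y$ upward on the coordinates where $f_i(y)>0$, using weak irreducibility to force additional strict inequalities until a contradiction is reached---and it also strengthens the hypothesis to $y\in\mathbb{R}^n_{++}$. But your own argument in fact adapts to the weakly irreducible case with no extra hypothesis: from ``$i\in\beta$ and $A_{ii_2\dots i_m}>0$ imply $i_2,\dots,i_m\in\beta$'' one gets $R(\mathcal{A})_{ij}=0$ for every $i\in\beta$, $j\notin\beta$, so $R(\mathcal{A})$ is reducible and $\mathcal{A}$ is weakly reducible. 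Thus your method would yield Lemma~\ref{iterate} (and even the stronger version with $y\in\mathbb{R}^n_+\setminus\{0\}$) more directly than the paper's perturbation argument; your expectation that ``a genuinely different argument will be needed'' is too pessimistic.
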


We can now state and prove the analogue of the above proposition for weakly irreducible nonnegative tensors.

\begin{lemma}\label{iterate}
Let $\mathcal{A}$ be an $m$-order $n$-dimensional weakly irreducible nonnegative tensor. If there exists $y \in \mathbb{R}^n_{++}$, such that $(\mathcal{A -\rho(A)\mathcal{I}})y^{m-1} \geq 0$, then $y$ is an eigenvector of $\mathcal{A}$ corresponding to the eigenvalue $\rho{(\mathcal{A})}$.
\end{lemma}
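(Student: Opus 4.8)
The plan is to show that the hypothesis $(\mathcal{A}-\rho(\mathcal{A})\mathcal{I})y^{m-1}\ge 0$ with $y\in\mathbb{R}^n_{++}$ already forces equality, i.e.\ $\mathcal{A}y^{m-1}=\rho(\mathcal{A})y^{[m-1]}$. Write $\rho=\rho(\mathcal{A})$ and suppose, for contradiction, that $(\mathcal{A}y^{m-1})_k > \rho\, y_k^{m-1}$ for some index $k$, while $(\mathcal{A}y^{m-1})_i \ge \rho\, y_i^{m-1}$ for all $i$. Since $y>0$, this says
\[
\min_{i}\frac{(\mathcal{A}y^{m-1})_i}{y_i^{m-1}} \ge \rho,
\qquad \frac{(\mathcal{A}y^{m-1})_k}{y_k^{m-1}} > \rho .
\]
The first display, via the Collatz--Wielandt characterization in Proposition \ref{fried_colla} (note $y\in\mathbb{R}^n_{++}\subset\mathbb{R}^n_+\setminus\{0\}$ and the $\min$ over $x_i>0$ is a full $\min$ here), gives $\rho = \sup_{x}\min_{x_i>0}\frac{(\mathcal{A}x^{m-1})_i}{x_i^{m-1}} \ge \min_i \frac{(\mathcal{A}y^{m-1})_i}{y_i^{m-1}} \ge \rho$, so in fact $\min_i \frac{(\mathcal{A}y^{m-1})_i}{y_i^{m-1}} = \rho$ and $y$ attains the supremum. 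The heart of the argument is then to show that a maximizer of this Collatz--Wielandt ratio on which the ratio is not constant cannot exist for a weakly irreducible tensor.

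For this I would use a perturbation trick analogous to the classical matrix proof: perturb $y$ in the direction of the coordinate $k$ where the strict inequality holds. Concretely, set $y(\varepsilon) = y + \varepsilon e_k$ for small $\varepsilon>0$ (still strictly positive). Compute $(\mathcal{A}\,y(\varepsilon)^{m-1})_i$ and compare with $\rho\, y(\varepsilon)_i^{m-1}$ componentwise. For $i=k$, the inequality $(\mathcal{A}y^{m-1})_k > \rho y_k^{m-1}$ is strict, hence survives for small $\varepsilon$. For $i\ne k$, the left side $(\mathcal{A}\,y(\varepsilon)^{m-1})_i$ is nondecreasing in $\varepsilon$ (all entries of $\mathcal{A}$ are nonnegative and only a positive coordinate was increased), while the right side $\rho\, y_i^{m-1}$ is unchanged since $y_i$ is unchanged; so the inequality $(\mathcal{A}\,y(\varepsilon)^{m-1})_i \ge \rho\, y(\varepsilon)_i^{m-1}$ still holds, and in fact becomes strict for those $i$ with $R(\mathcal{A})_{ik}>0$ because increasing $y_k$ strictly increases $(\mathcal{A}\,y(\varepsilon)^{m-1})_i$. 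Iterating the implication ``$(\mathcal{A}z^{m-1})_j>\rho z_j^{m-1}$ for $j$ in a set $S$, $z>0$, and $R(\mathcal{A})_{ij}>0$ for some $j\in S$'' $\Rightarrow$ ``the strict inequality propagates to $i$,'' and using that weak irreducibility means $R(\mathcal{A})$ is irreducible so its directed graph is strongly connected, the strict-inequality set spreads from $\{k\}$ to all of $\{1,\dots,n\}$ after finitely many coordinate bumps, producing a vector $z\in\mathbb{R}^n_{++}$ with $\mathcal{A}z^{m-1} > \rho z^{[m-1]}$ componentwise.

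Once such a $z$ exists, I get a contradiction two ways: directly, $\min_i \frac{(\mathcal{A}z^{m-1})_i}{z_i^{m-1}} > \rho = \sup_x \min_{x_i>0}\frac{(\mathcal{A}x^{m-1})_i}{x_i^{m-1}}$, impossible; or, if one prefers to avoid re-invoking Proposition \ref{fried_colla}, one can note that $\mathcal{A}z^{m-1} > \rho z^{[m-1]}$ means $(\mathcal{A} - (\rho+\delta)\mathcal{I})z^{m-1} \ge 0$ for some $\delta>0$, and then apply Proposition \ref{fried_colla} (or a Perron--Frobenius comparison) to conclude $\rho(\mathcal{A}) \ge \rho+\delta$, a contradiction. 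Either way we conclude the strict inequality at $k$ was impossible, so $(\mathcal{A}y^{m-1})_i = \rho y_i^{m-1}$ for every $i$, i.e.\ $y$ is an eigenvector of $\mathcal{A}$ for $\rho(\mathcal{A})$, as claimed.

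\textbf{Main obstacle.} The delicate point is the propagation/iteration step: one must be careful that bumping coordinate $k$ does not destroy the non-strict inequalities already in hand at other coordinates (it does not, by nonnegativity of $\mathcal{A}$ and monotonicity of $\mathcal{A}x^{m-1}$ in each positive coordinate), and one must correctly translate ``$R(\mathcal{A})$ irreducible'' into ``the monomial $x_{i_2}\cdots x_{i_m}$ with some $i_j = k$ appears with positive coefficient in $(\mathcal{A}x^{m-1})_i$'' so that the strict inequality genuinely transmits along each arc of the strongly connected graph. Making this bookkeeping precise — and ensuring each successive perturbation is small enough to preserve all strict inequalities obtained so far — is the technical core; everything else is an application of Propositions \ref{fried_colla} and (for the alternative ending) \ref{mono} or \ref{perron_frob}.
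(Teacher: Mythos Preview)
Your proposal is correct and follows essentially the same strategy as the paper: assume a strict inequality at some coordinate, use small positive perturbations of $y$ together with nonnegativity of $\mathcal{A}$ and the irreducibility of $R(\mathcal{A})$ to propagate strictness to every coordinate, and then contradict the Collatz--Wielandt characterization in Proposition~\ref{fried_colla}. The only cosmetic difference is that the paper bumps all coordinates in the current ``strict set'' simultaneously (using a continuity ball to preserve the existing strict inequalities) and shows the set grows by at least one index, whereas you bump a single coordinate at a time and walk along the strongly connected digraph of $R(\mathcal{A})$; both variants implement the same idea.
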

\begin{proof}
We have to prove that $(\mathcal{A}- \rho(\mathcal{A})\mathcal{I})y^{m-1} = 0$. First we shall prove that $(\mathcal{A}- \rho(\mathcal{A})\mathcal{I})y^{m-1} > 0$ is not possible. Suppose $(\mathcal{A}- \rho(\mathcal{A})\mathcal{I})y^{m-1} > 0$, then there exists $\epsilon > 0$ such that $\mathcal{A}y^{m-1} \geq (\rho(A) + \epsilon)\mathcal{I}y^{m-1}$. Then by Propositions \ref{perron_frob} and  \ref{fried_colla}, we have
\begin{center}
$\rho({\mathcal{A}}) = \sup_{z \in \mathbb{R}^n_+ \setminus \{0\}} \min_{z_i >0} \frac{(\mathcal{A}z^{m-1})_i}{z_i^{m-1}} \geq \min_{i} \frac{(\mathcal{A}y^{m-1})_i}{y_i^{m-1}} \geq \rho(\mathcal{A}) + \epsilon$,
\end{center}
a contradiction. So $(\mathcal{A}- \rho(\mathcal{A})\mathcal{I})y^{m-1}$ has at least one zero entry.

Now, for $i = 1, \dots , n $, define the functions $f_i : \mathbb{R}^n \longrightarrow \mathbb{R}$ as follows:
\begin{center}
$f_{i}(x) = \sum_{i_2,\dots ,i_m = 1}^{n} A_{ii_2\dots i_m} x_{i_2}\cdots x_{i_{m}} - \rho(\mathcal{A}) x_i^{m-1}$.
$ $
\end{center}

If $f_i(y) = 0$ for all $1 \leq i \leq n$, then we are done. Suppose $f_i(y)\neq 0$ for some $i$.
Without loss of generality assume $f_{i}(y) > 0$ for $1 \leq i \leq k$ and $f_i (y) = 0$ for $k+1 \leq i \leq n$.
Since $f_i$ is continuous and  $f_i(y) > 0$ for all $1 \leq i \leq k$, there exists $\delta > 0$ such that $f_{i}(x) > 0$ for all $x \in B(y, \delta) = \{(z_1,\dots, z_n) \in \mathbb{R}^n : \sum_{i=1}^{n}|y_i - z_i| < \delta\}$ and $1 \leq i \leq k$.


Let $z \geq y > 0$ such that $z_{j} = y_{j}$ for all $j \in \{k+1, \dots , n \}$ and $z_j > y_j$ for all $j \in \{1, \dots , k\}$. Then, we have $z_{i_2}\cdots z_{i_m} - y_{i_2}\cdots y_{i_m} > 0$ for all $i_2, \dots, i_m$ with at least one $i_j \notin \{ k+1, \dots , n \}$. We claim that $f_{i}(z) > f_{i} (y)$ for some $i \in \{k+1, \cdots , n\}$. If $f_{i}(z) = f_{i} (y)$ for all $i \in \{k+1, \dots , n\}$, then $A_{ii_2\dots i_m} = 0$ for all $i \in \{k+1, \dots , n\}$ and $i_2, \dots, i_m $ with at least one $i_j \notin \{k+1 , \dots , n\} $.
Thus $R(\mathcal{A})_{ij} = 0$ for all $i \in \{k+1 , \dots , n\}$ and $j \notin \{k+1 , \dots , n\}$. Hence $\mathcal{A}$ is weakly reducible, a contradiction. Thus $f_{i}(z) > 0$ for some $i \in \{k+1, \dots , n\}$.

Choose $z$ such that $z \in B(y, \delta)$ and $z \geq y $ with $z_{j} = y_{j}$ for all $j \in \{k+1, \dots , n \}$ and $z_j > y_j$ for all $j \in \{1, \dots , k\}$. Without loss of generality assume $f_{k+1}(z) > 0$. If we replace $y$ by $z$, then $f_i(z) > 0$ for all $i \in \{1, \dots ,k+1 \}$, $f_i(z)\geq 0$ for all $i \in \{k+2, \dots , n\}$ and $z \in \mathbb{R}^n_{++}$. Thus repeating the process at most $(n-k)$ times, we get that for some $z^{'} \in \mathbb{R}^n_{++}$, $f_i(z^{'}) > 0$  for all $1 \leq i \leq n$, which is, as shown above, not possible.

Hence $(\mathcal{A}- \rho(\mathcal{A})I)y^{m-1} = 0$, that is, $y$ is an eigenvector corresponding to $\rho(\mathcal{A})$.
\end{proof}

The proof of next lemma is similar to the that of the above lemma by using the fact $\rho(A)=\inf_{x\in \mathbb{R}^n_{++}}\max_{1\le i\le n}\frac{(\mathcal{A}x^{m-1})_i}{x_i^{m-1}}$ from proposition \ref{fried_colla}.

\begin{lemma}\label{iterate1}
Let $\mathcal{A}$ be an $m$-order $n$-dimensional weakly irreducible tensor. If $y\in \mathbb{R}^n_{++}$
satisfies $(\mathcal{A} -\rho(\mathcal{A})\mathcal{I})y^{m-1}\le 0$, then $y$ is a
positive eigenvector corresponding to $\rho(\mathcal{A})$.
\end{lemma}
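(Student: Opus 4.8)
The plan is to imitate the proof of Lemma~\ref{iterate}, but replace every appeal to the ``$\sup\min$'' Collatz--Wielandt formula with the dual ``$\inf\max$'' formula, and reverse all the relevant inequalities. First I would reduce to showing $(\mathcal{A}-\rho(\mathcal{A})\mathcal{I})y^{m-1}=0$. To rule out $(\mathcal{A}-\rho(\mathcal{A})\mathcal{I})y^{m-1}<0$, I would pick $\epsilon>0$ with $\mathcal{A}y^{m-1}\le(\rho(\mathcal{A})-\epsilon)\mathcal{I}y^{m-1}$ and conclude from Proposition~\ref{fried_colla} that
\[
\rho(\mathcal{A})=\inf_{z\in\mathbb{R}^n_{++}}\max_i\frac{(\mathcal{A}z^{m-1})_i}{z_i^{m-1}}\le\max_i\frac{(\mathcal{A}y^{m-1})_i}{y_i^{m-1}}\le\rho(\mathcal{A})-\epsilon,
\]
a contradiction; hence $(\mathcal{A}-\rho(\mathcal{A})\mathcal{I})y^{m-1}$ has at least one zero entry.

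Next, with the same functions $f_i(x)=\sum_{i_2,\dots,i_m}A_{ii_2\dots i_m}x_{i_2}\cdots x_{i_m}-\rho(\mathcal{A})x_i^{m-1}$, I would assume for contradiction that $f_i(y)<0$ for $1\le i\le k$ and $f_i(y)=0$ for $k+1\le i\le n$ (with $k\ge1$), and use continuity to get $\delta>0$ with $f_i(x)<0$ on $B(y,\delta)$ for $1\le i\le k$. Now the monotonicity direction flips: I would take $z\le y$ with $z_j=y_j$ for $j>k$ and $z_j<y_j$ for $j\le k$, so that $z_{i_2}\cdots z_{i_m}-y_{i_2}\cdots y_{i_m}<0$ whenever some $i_j\le k$ (here it is essential that all coordinates stay positive, which is why $y\in\mathbb{R}^n_{++}$ and $z$ close to $y$). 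Since $A_{ii_2\dots i_m}\ge0$, decreasing those coordinates can only decrease each $f_i$; and if $f_i(z)=f_i(y)=0$ for \emph{all} $i>k$, then $A_{ii_2\dots i_m}=0$ for all $i>k$ and all index tuples with some $i_j\le k$, which forces $R(\mathcal{A})_{ij}=0$ for $i>k$, $j\le k$, contradicting weak irreducibility. So $f_i(z)<0$ for some $i>k$; choosing $z\in B(y,\delta)$ as well, we may assume (WLOG) $f_{k+1}(z)<0$, and replacing $y$ by $z$ we have strictly increased the number of strictly negative coordinates. Iterating at most $n-k$ times yields $z'\in\mathbb{R}^n_{++}$ with $f_i(z')<0$ for all $i$, i.e.\ $(\mathcal{A}-\rho(\mathcal{A})\mathcal{I})(z')^{m-1}<0$, which was shown impossible. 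Hence $(\mathcal{A}-\rho(\mathcal{A})\mathcal{I})y^{m-1}=0$, and since $y\in\mathbb{R}^n_{++}$, $y$ is a positive eigenvector for $\rho(\mathcal{A})$.

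The one genuinely new point to check carefully — the ``main obstacle'', though it is minor — is that the sign-reversal in the multiplicative step really works: when we replace ``$z\ge y$, $z_j>y_j$'' by ``$z\le y$, $z_j<y_j$'', we need the products $z_{i_2}\cdots z_{i_m}$ to stay positive so that the strict inequality $z_{i_2}\cdots z_{i_m}<y_{i_2}\cdots y_{i_m}$ (for tuples touching $\{1,\dots,k\}$) holds with the correct sign and that $f_i(z)\le f_i(y)$ for the untouched indices $i>k$; both follow from $z>0$ and $A\ge0$. Everything else is a line-by-line transcription of the argument for Lemma~\ref{iterate}. I would therefore present the proof tersely, noting only the changed formula and the flipped inequalities, exactly as the paper already signals in the sentence preceding the statement.
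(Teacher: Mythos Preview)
Your proposal is correct and is precisely the argument the paper has in mind: the paper gives no separate proof of Lemma~\ref{iterate1} but states that it follows from the proof of Lemma~\ref{iterate} by using $\rho(\mathcal{A})=\inf_{x\in \mathbb{R}^n_{++}}\max_i\frac{(\mathcal{A}x^{m-1})_i}{x_i^{m-1}}$ instead of the $\sup\min$ formula. Your careful remark that one must keep $z>0$ when perturbing downward (so that the product inequalities go the right way and $f_i(z)\le f_i(y)$ for $i>k$) is exactly the only point requiring attention in the sign-reversed argument.
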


In the next theorem we establish a relation between the spectral radius of a weakly irreducible nonnegative tensor and its principal subtensors.
\begin{theorem}\label{prin-str-mono}
Let $\mathcal{A}$ be an $m$-order $n$-dimensional weakly irreducible tensor, and $\mathcal{A}[\alpha]$ a  principal subtensor of $\mathcal{A}$,
$|\alpha|=k<n$. Then $\rho(\mathcal{A}[\alpha])<\rho(\mathcal{A})$.
\end{theorem}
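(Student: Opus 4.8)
The plan is to assume for contradiction that $\rho(\mathcal{A}[\alpha]) \geq \rho(\mathcal{A})$; since $\mathcal{A}[\alpha]$ is a principal subtensor of the nonnegative tensor $\mathcal{A}$, Proposition \ref{spec_sub} gives $\rho(\mathcal{A}[\alpha]) \leq \rho(\mathcal{A})$, so in fact $\rho(\mathcal{A}[\alpha]) = \rho(\mathcal{A})$. The idea is then to use the Perron eigenvector of $\mathcal{A}[\alpha]$ to build a nonnegative test vector on all of $\{1,\dots,n\}$ that witnesses $\rho(\mathcal{A})$ via the Collatz--Wielandt bound of Proposition \ref{fried_colla}, and then push that up to a genuine positive eigenvector using Lemma \ref{iterate}, contradicting the uniqueness clause of Proposition \ref{perron_frob}(b).

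In detail: first I would observe that $\mathcal{A}[\alpha]$ is itself nonnegative, so by Proposition \ref{perron_frob}(a) there is $0 \neq u \in \mathbb{R}^k_+$ with $\mathcal{A}[\alpha]u^{m-1} = \rho(\mathcal{A}[\alpha])u^{[m-1]} = \rho(\mathcal{A})u^{[m-1]}$. Extend $u$ to $y \in \mathbb{R}^n_+$ by setting $y_i = u_i$ for $i \in \alpha$ and $y_i = 0$ for $i \notin \alpha$. Then for $i \in \alpha$, using nonnegativity of $\mathcal{A}$ and that the terms with some index outside $\alpha$ contribute $y_{i_2}\cdots y_{i_m} = 0$ when... wait, more carefully: $(\mathcal{A}y^{m-1})_i = \sum_{i_2,\dots,i_m} A_{ii_2\dots i_m} y_{i_2}\cdots y_{i_m} \geq \sum_{i_2,\dots,i_m \in \alpha} A_{ii_2\dots i_m} u_{i_2}\cdots u_{i_m} = (\mathcal{A}[\alpha]u^{m-1})_i = \rho(\mathcal{A})y_i^{m-1}$ for $i\in\alpha$ (in fact, equality holds here since the omitted terms have a zero factor). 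For $i \notin \alpha$ we trivially have $(\mathcal{A}y^{m-1})_i \geq 0 = \rho(\mathcal{A})y_i^{m-1}$. Hence $(\mathcal{A} - \rho(\mathcal{A})\mathcal{I})y^{m-1} \geq 0$ with $y \in \mathbb{R}^n_+ \setminus \{0\}$.

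At this point $y$ is only nonnegative, not strictly positive, so Lemma \ref{iterate} does not apply directly; this is the main obstacle. To get around it I would use $y$ as a Collatz--Wielandt test vector: by Proposition \ref{fried_colla}, $\rho(\mathcal{A}) = \sup_{x\in\mathbb{R}^n_+\setminus\{0\}}\min_{x_i>0}\frac{(\mathcal{A}x^{m-1})_i}{x_i^{m-1}}$, and the inequalities above show $\min_{y_i>0}\frac{(\mathcal{A}y^{m-1})_i}{y_i^{m-1}} \geq \rho(\mathcal{A})$. One then argues, in the spirit of the proof of Lemma \ref{iterate} (and using weak irreducibility to rule out the bad case via the matrix $R(\mathcal{A})$), that starting from such a $y$ one can perturb the zero coordinates upward without destroying the inequality, eventually producing $z' \in \mathbb{R}^n_{++}$ with $(\mathcal{A} - \rho(\mathcal{A})\mathcal{I})(z')^{m-1} \geq 0$; then Lemma \ref{iterate} forces $z'$ to be a positive eigenvector for $\rho(\mathcal{A})$. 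But $\mathcal{A}$ restricted to coordinates outside $\alpha$... actually the cleaner route: since $z'$ is a positive eigenvector, Proposition \ref{perron_frob}(b) identifies $z'$ (up to scaling) as the \emph{unique} positive Perron eigenvector. Now consider $\mathcal{A}[\alpha']$ for $\alpha' = \{1,\dots,n\}\setminus\alpha$, or more simply extract a contradiction from the fact that the original $y$, which vanishes on $\alpha' \neq \emptyset$, satisfies $(\mathcal{A} - \rho(\mathcal{A})\mathcal{I})y^{m-1} \geq 0$: running the perturbation argument of Lemma \ref{iterate} shows that if equality $(\mathcal{A}-\rho(\mathcal{A})\mathcal{I})y^{m-1}=0$ held then $y$ would be an $H^+$-eigenvector with support $\alpha$, and one checks directly from the definition that weak irreducibility of $\mathcal{A}$ forces $R(\mathcal{A})$ to be irreducible, so an eigenvector of $\mathcal{A}$ with equality pattern supported on a proper subset $\alpha$ leads, exactly as in the last paragraph of the proof of Lemma \ref{iterate}, to $R(\mathcal{A})_{ij}=0$ for $i\in\alpha'$, $j\in\alpha$ — contradicting weak irreducibility. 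The only remaining case, $(\mathcal{A}-\rho(\mathcal{A})\mathcal{I})y^{m-1}$ having a strictly positive coordinate somewhere, is again pushed to a contradiction by the Collatz--Wielandt sup bound exactly as in the first part of the proof of Lemma \ref{iterate}. Either way we contradict $\rho(\mathcal{A}[\alpha]) = \rho(\mathcal{A})$, so $\rho(\mathcal{A}[\alpha]) < \rho(\mathcal{A})$, as desired.
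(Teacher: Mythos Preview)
Your proposal has a genuine gap, and it is precisely at the point you flag as ``the main obstacle'': the vector $y$ obtained by extending $u$ by zeros lies only in $\mathbb{R}^n_+$, and the workaround you sketch does not go through.

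Concretely, look at your case where $(\mathcal{A}-\rho(\mathcal{A})\mathcal{I})y^{m-1}=0$. You claim that an $H^+$-eigenvector supported on a proper subset $\alpha$ yields $R(\mathcal{A})_{ij}=0$ for $i\in\alpha'$, $j\in\alpha$, ``exactly as in the last paragraph of the proof of Lemma~\ref{iterate}.'' But that step in Lemma~\ref{iterate} relies crucially on $y\in\mathbb{R}^n_{++}$: when every $y_l>0$, increasing the coordinates in one block makes $z_{i_2}\cdots z_{i_m}-y_{i_2}\cdots y_{i_m}>0$ whenever \emph{at least one} index lies in that block, which is exactly what forces the corresponding $R(\mathcal{A})$-entries to vanish. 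In your situation $y$ vanishes on $\alpha'$, so the product $y_{i_2}\cdots y_{i_m}$ is already zero as soon as any $i_l\in\alpha'$; the eigenvector equation $(\mathcal{A}y^{m-1})_i=0$ for $i\in\alpha'$ therefore gives only $A_{ii_2\dots i_m}=0$ for $i\in\alpha'$ and \emph{all} $i_2,\dots,i_m\in\alpha$. That is the definition of (ordinary) reducibility of $\mathcal{A}$ with subset $\alpha'$, not of weak reducibility. Since for $m\ge 3$ weakly irreducible tensors can be reducible, no contradiction follows. (For instance, with $m=3$, $n=2$, $\alpha=\{1\}$: if $A_{211}=0$ but $A_{212}>0$, then $y=(u_1,0)$ satisfies $(\mathcal{A}y^2)_2=0$, yet $R(\mathcal{A})_{21}\ge A_{212}>0$.) Your other case, where some coordinate of $(\mathcal{A}-\rho\mathcal{I})y^{m-1}$ is strictly positive, is likewise not handled by ``the first part of the proof of Lemma~\ref{iterate}'', since that part needs strict positivity in \emph{every} coordinate, whereas here $f_i(y)=0$ for all $i\in\alpha$.

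The paper's proof avoids this trap by going in the opposite direction: instead of extending the Perron vector of $\mathcal{A}[\alpha]$ by zeros, it restricts the \emph{positive} Perron vector $y\in\mathbb{R}^n_{++}$ of $\mathcal{A}$ (guaranteed by weak irreducibility) to $\bar y=y[\alpha]\in\mathbb{R}^k_{++}$. Since $(\mathcal{A}[\alpha]\bar y^{m-1})_i\le(\mathcal{A}y^{m-1})_i=\rho(\mathcal{A})\bar y_i^{m-1}$, the $\inf$-form of Collatz--Wielandt and Lemma~\ref{iterate1} (the $\le$-analogue of Lemma~\ref{iterate}) apply with a genuinely positive vector, after first reducing to the case that $\mathcal{A}[\alpha]$ is itself weakly irreducible. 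Weak irreducibility of $\mathcal{A}$ then supplies an entry $A_{ii_2\dots i_m}>0$ with $i\in\alpha$ and some $i_j\notin\alpha$, producing a strict inequality at coordinate $i$.
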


\begin{proof}
Since $\rho(\mathcal{A}[\alpha])$ is equal to $\rho(\mathcal{A}')$ for some weakly irreducible subtensor $\mathcal{A}'$ of $\mathcal{A}[\alpha]$, it suffices to consider the case  that $\mathcal{A}[\alpha]$ itself is weakly irreducible.
Let $z\in \mathbb{R}^k_{++}$ and $y\in \mathbb{R}^n_{++}$ be  positive
eigenvectors of $\mathcal{A}[\alpha]$ and $\mathcal{A}$, respectively,  corresponding to their  largest eigenvalues $\rho(\mathcal{A}[\alpha])$ and $\rho(\mathcal{A})$.
Let $\bar{y}= y[\alpha]$. Then
\[\rho(\mathcal{A}[\alpha])=\inf_{x\in \mathbb{R}^k_{++}}\max_{1\le i\le k}\frac{(\mathcal{A}[\alpha]x^{m-1})_i}{x_i^{m-1}}\le \max_{1\le i\le k}\frac{(\mathcal{A}[\alpha]\bar{y}^{m-1})_i}{\bar{y}_i^{m-1}}\le \max_{1\le i\le n}\frac{(\mathcal{A}y^{m-1})_i}{y_i^{m-1}}=\rho(\mathcal{A}). \]
The last inequality uses the fact that $(\mathcal{A}[\alpha] \bar{y}^{m-1})_i\le (\mathcal{A}y^{m-1})_i$ for every $1\le i\le k$.
 If $\rho(\mathcal{A}[\alpha])=\rho(\mathcal{A})$, then all the above inequalities  are equalities. In particular,
\[\rho(\mathcal{A}[\alpha])=\rho(\mathcal{A})=\max_{1\le i\le k}\frac{(\mathcal{A}[\alpha]\bar{y}^{m-1})_i}{\bar{y}_i^{m-1}}.\]
By the previous lemma, $\bar{y}$ is the unique (up to scalar multiple) eigenvector of the weakly irreducible tensor $\mathcal{A}[\alpha]$.
We may therefore assume that $\bar{y}_i=z_i$ for every $1\le i\le k$. Since $\mathcal{A}$ is weakly irreducible, there exist $1\le i\le k$ and $i_2, \ldots, i_m\in \{1, \dots, n\}$ at least
one of which is greater than $k$, such that $A_{ii_2\dots i_m}>0$. Thus using this specific $i$ we get that
\[\rho(\mathcal{A}[\alpha])=\frac{(\mathcal{A}[\alpha] \bar{y}^{m-1})_i}{\bar{y}_i^{m-1}}<\frac{(\mathcal{A}y^{m-1})_i}{y_i^{m-1}}=\rho(\mathcal{A}).\]
\end{proof}

Now we are ready  to prove the strict monotonicity of the spectral radius for weakly irreducible nonnegative tensors.

\begin{theorem}\label{irre_mono}
Let $\mathcal{A}$ and $\mathcal{B}$ be two tensors such that $ 0 \leq \mathcal{A} \leq \mathcal{B}$ and $\mathcal{B}$ is a weakly irreducible tensor.  Then,
\begin{itemize}
\item[(a)] $\rho(\mathcal{A}) \leq \rho(\mathcal{B})$,
\item[(b)] if $\mathcal{A} \neq \mathcal{B}$, then $\rho(\mathcal{A}) < \rho(\mathcal{B})$.

\end{itemize}

\end{theorem}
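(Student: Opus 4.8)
The plan is to deduce part (a) directly from Proposition \ref{mono}(a): since $0 \le \mathcal{A} \le \mathcal{B}$, we get $\rho(\mathcal{A}) \le \rho(\mathcal{B})$ with no extra work. All the content lies in part (b), which I would argue by contradiction: assume $\mathcal{A} \ne \mathcal{B}$ but $\rho(\mathcal{A}) = \rho(\mathcal{B})$ (knowing already that $\rho(\mathcal{A}) \le \rho(\mathcal{B})$), and derive a contradiction.

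The first move is to reduce to the case where $\mathcal{A}$ is itself weakly irreducible. If it is not, then $\mathcal{A}$ is weakly reducible, and Propositions \ref{weak_irr_block} and \ref{weak_irr_block_spec} supply an index set $\alpha$ with $|\alpha| < n$ such that $\mathcal{A}[\alpha]$ is weakly irreducible and $\rho(\mathcal{A}[\alpha]) = \rho(\mathcal{A})$. Then $\mathcal{A}[\alpha] \le \mathcal{B}[\alpha]$ are nonnegative, so $\rho(\mathcal{A}[\alpha]) \le \rho(\mathcal{B}[\alpha])$ by Proposition \ref{mono}(a); and since $\mathcal{B}$ is weakly irreducible with $|\alpha| < n$, Theorem \ref{prin-str-mono} gives $\rho(\mathcal{B}[\alpha]) < \rho(\mathcal{B})$. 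Combining, $\rho(\mathcal{A}) = \rho(\mathcal{A}[\alpha]) \le \rho(\mathcal{B}[\alpha]) < \rho(\mathcal{B})$, which already contradicts $\rho(\mathcal{A}) = \rho(\mathcal{B})$. Hence it suffices to handle the case in which $\mathcal{A}$ is weakly irreducible.

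In that case I would take a positive eigenvector $y \in \mathbb{R}^n_{++}$ of $\mathcal{B}$ for $\rho(\mathcal{B})$, which exists by Proposition \ref{perron_frob}(b). Since $\mathcal{A} \le \mathcal{B}$ and $y > 0$, we get $\mathcal{A}y^{m-1} \le \mathcal{B}y^{m-1} = \rho(\mathcal{B})\,y^{[m-1]} = \rho(\mathcal{A})\,y^{[m-1]}$, i.e. $(\mathcal{A} - \rho(\mathcal{A})\mathcal{I})y^{m-1} \le 0$. Lemma \ref{iterate1} then forces $y$ to be an eigenvector of $\mathcal{A}$ corresponding to $\rho(\mathcal{A})$, so $\mathcal{A}y^{m-1} = \rho(\mathcal{A})\,y^{[m-1]} = \mathcal{B}y^{m-1}$ and therefore $(\mathcal{B} - \mathcal{A})y^{m-1} = 0$. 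But $\mathcal{B} - \mathcal{A}$ is a nonzero nonnegative tensor, so there is an index $j$ and indices $i_2, \dots, i_m$ with $B_{j i_2 \dots i_m} > A_{j i_2 \dots i_m}$; since every $y_i > 0$, the $j$th coordinate $((\mathcal{B} - \mathcal{A})y^{m-1})_j$ is then strictly positive, a contradiction.

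The step I expect to be the main obstacle is the reduction in the second paragraph: $\mathcal{A}$ is only assumed nonnegative, not weakly irreducible, so Lemma \ref{iterate1} cannot be applied to it directly, and one really needs the Frobenius-normal-form results (Propositions \ref{weak_irr_block} and \ref{weak_irr_block_spec}) together with Theorem \ref{prin-str-mono} to dispose of the weakly reducible case. A minor point to watch is that Lemma \ref{iterate1}, although phrased for ``weakly irreducible tensors'', is invoked here for the nonnegative tensor $\mathcal{A}$, which is precisely the setting where its proof via the Collatz--Wielandt identity of Proposition \ref{fried_colla} is valid.
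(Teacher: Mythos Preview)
Your proof is correct and follows essentially the same two–case split as the paper: dispose of weakly reducible $\mathcal{A}$ via Propositions~\ref{weak_irr_block}, \ref{weak_irr_block_spec} and Theorem~\ref{prin-str-mono}, and handle weakly irreducible $\mathcal{A}$ by showing that $\rho(\mathcal{A})=\rho(\mathcal{B})$ forces $(\mathcal{B}-\mathcal{A})y^{m-1}=0$ for a positive $y$, hence $\mathcal{A}=\mathcal{B}$. The one (harmless) difference is a mirror swap in Case~1: the paper starts from a positive eigenvector of $\mathcal{A}$ and applies Lemma~\ref{iterate} to $\mathcal{B}$, whereas you start from a positive eigenvector of $\mathcal{B}$ and apply Lemma~\ref{iterate1} to $\mathcal{A}$; both routes yield $\mathcal{A}y^{m-1}=\mathcal{B}y^{m-1}$ and the same conclusion.
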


\begin{proof}
\textbf{Case 1:}
Let $\mathcal{A}$ be a nonnegative weakly irreducible tensor, then by Proposition \ref{perron_frob} we have that $\rho(\mathcal{A})$ is an $H^{++}$-eigenvalue of $\mathcal{A}$ with an eigenvector $y \in \mathbb{R}^n_{++}$. Since $\mathcal{A} \leq \mathcal{B}$, we have $\rho(\mathcal{A})y^{[m-1]} = \mathcal{A}y^{m-1} \leq \mathcal{B}y^{m-1}$.

Now by Proposition \ref{fried_colla}, we have $$\rho(\mathcal{B}) = \sup_{x \in \mathbb{R}^n_{+} \setminus \{0\}} \min_{x_i >0} \frac{(\mathcal{B}x^{m-1})_i}{x_i^{m-1}} \geq \min_{i} \frac{(\mathcal{B}y^{m-1})_i}{y_i^{m-1}}  \geq \min_{i} \frac{(\mathcal{A}y^{m-1})_i}{y_i^{m-1}} =  \rho(\mathcal{A})$$

Suppose $\rho({\mathcal{B}}) = \rho(\mathcal{A})$. Then we have $\mathcal{B}y^{m-1} \geq \rho(\mathcal{B})y^{[m-1]}$. Now by Lemma \ref{iterate}, we have $\mathcal{B}y^{m-1} = \rho(\mathcal{B})y^{[m-1]}$. Thus $\mathcal{B}y^{m-1} = \mathcal{A}y^{m-1}$. Since $y \in \mathbb{R}^n_{++}$, we have $\mathcal{B}= \mathcal{A}$. Thus, if $\mathcal{A} \neq \mathcal{B} $, then $\rho({\mathcal{A}}) < \rho(\mathcal{B})$.

\textbf{Case 2:} Let $\mathcal{A}$ be weakly reducible. Then there exists a weakly irreducible subtensor  of $\mathcal{A}$, $\mathcal{A}[\alpha]$, such that $\rho(\mathcal{A}[\alpha])=\rho(\mathcal{A})$ and $|\alpha|=k<n$. Then
$\mathcal{B}[\alpha]$ is a weakly irreducible subtensor of $\mathcal{B}$. By
 Theorem \ref{prin-str-mono},
\[\rho(\mathcal{A})=\rho(\mathcal{A}[\alpha])\le \rho(\mathcal{B}[\alpha])<\rho(\mathcal{B}).\]
\end{proof}

Theorem \ref{irre_mono} implies the previously known result for irreducible tensors (see proposition 2.5).
\begin{rem}
From Theorem \ref{prin-str-mono}, it is clear that any nonnegative eigenvector of a weakly irreducible nonnegative tensor corresponding to the spectral radius must be positive.
\end{rem}
\section{Interval hull of $\mathcal{M}$-tensors}\label{interval_m_tensors}

In this section,  we first recall the definition of $\mathcal{M}$-tensors. In Theorem \ref{principal}, we prove that a principal subtensor of an $\mathcal{M}$-tensor (a strong $\mathcal{M}$-tensor) is an $\mathcal{M}$-tensor (a strong $\mathcal{M}$-tensor).  Then we prove some properties of $\mathcal{M}$-tensors, and use them to prove one of the main results of this section, Theorem \ref{main_int} about the interval hull of strong $\mathcal{M}$-tensors.

We recall the definitions of $\mathcal{Z}$-tensors, $\mathcal{M}$-tensors and strong $\mathcal{M}$-tensors from \cite{qi-ding-wei} and \cite{qi-zh-zho}.

\begin{defn}
{\rm
Let $\mathcal{A}$ be an $m$-order $n$-dimensional tensor. Then $\mathcal{A}$ is called a \emph{$\mathcal{Z}$-tensor} if there exists a nonnegative tensor $\mathcal{D}$ and a real number $s$ such that $\mathcal{A} = s\mathcal{I} - \mathcal{D}$. A $\mathcal{Z}$-tensor $\mathcal{A} = s \mathcal{I} - \mathcal{D}$ is said to be an \emph{$\mathcal{M}$-tensor} if $s \geq \rho(\mathcal{D})$. If $s > \rho(\mathcal{D})$, then $\mathcal{A}$ is called a \emph{strong $\mathcal{M}$-tensor}. A $\mathcal{Z}$-tensor $\mathcal{A} = s\mathcal{I} - \mathcal{D}$ with $\mathcal{D}\geq 0$, is called \emph{weakly irreducible}, if $\mathcal{D}$ is weakly irreducible.
}
\end{defn}

It is well known that a principal submatrix of an $M$-matrix (invertible $M$-matrix) is an $M$-matrix (invertible $M$-matrix). In the following theorem we prove the same holds for $\mathcal{M}$-tensors.
\begin{theorem}\label{principal}
Let $\mathcal{A}$ be an $m$-order $n$-dimensional $\mathcal{M}$-tensor  (a strong $\mathcal{M}$-tensor). Then all principal subtensors of $\mathcal{A}$ are $\mathcal{M}$-tensors (strong $\mathcal{M}$-tensors).
\end{theorem}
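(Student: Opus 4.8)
The plan is to exploit the defining decomposition of an $\mathcal{M}$-tensor together with Proposition \ref{spec_sub}, which already bounds the spectral radius of a principal subtensor of a nonnegative tensor. Fix a decomposition $\mathcal{A} = s\mathcal{I} - \mathcal{D}$ with $\mathcal{D} \geq 0$ and $s \geq \rho(\mathcal{D})$ (with $s > \rho(\mathcal{D})$ in the strong case), and let $\alpha \subset \{1,\dots,n\}$ with $|\alpha| = r$.

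First I would observe that taking a principal subtensor respects this decomposition. Indeed $\mathcal{A}[\alpha] = s\,\mathcal{I}[\alpha] - \mathcal{D}[\alpha]$, where $\mathcal{I}[\alpha]$ is exactly the $r$-dimensional identity tensor, since its $(i_1,\dots,i_m)$ entry with all $i_j \in \alpha$ equals $1$ precisely when $i_1 = \dots = i_m$ and is $0$ otherwise. Moreover $\mathcal{D}[\alpha] \geq 0$ because $\mathcal{D} \geq 0$. Hence $\mathcal{A}[\alpha]$ is a $\mathcal{Z}$-tensor. The only point deserving a word of care here is that the decomposition $\mathcal{A} = s\mathcal{I} - \mathcal{D}$ is not unique; this is irrelevant, because we simply keep the same $s$ and restrict the same $\mathcal{D}$.

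Next I would apply Proposition \ref{spec_sub} to the nonnegative tensor $\mathcal{D}$ and its principal subtensor $\mathcal{D}[\alpha]$, giving $\rho(\mathcal{D}[\alpha]) \leq \rho(\mathcal{D})$. Combining this with the hypothesis on $s$ yields $s \geq \rho(\mathcal{D}) \geq \rho(\mathcal{D}[\alpha])$, so $\mathcal{A}[\alpha]$ is an $\mathcal{M}$-tensor; and in the strong case $s > \rho(\mathcal{D}) \geq \rho(\mathcal{D}[\alpha])$, so $\mathcal{A}[\alpha]$ is a strong $\mathcal{M}$-tensor. Since $\alpha$ is arbitrary, every principal subtensor of $\mathcal{A}$ inherits the property.

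I do not expect a genuine obstacle once Proposition \ref{spec_sub} is invoked; the argument is short and the main content is really contained in that cited lemma (whose own proof amounts to padding $\mathcal{D}[\alpha]$ back to dimension $n$ with zeros and using monotonicity, Proposition \ref{mono} — one could alternatively inline this, but routing through Proposition \ref{spec_sub} is cleaner). The matrix analogue stated just before the theorem is the $m=2$ special case, so the tensor statement is a clean extension.
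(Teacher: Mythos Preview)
Your proof is correct and follows essentially the same route as the paper's: restrict the decomposition $\mathcal{A}=s\mathcal{I}-\mathcal{D}$ to $\alpha$, then invoke Proposition~\ref{spec_sub} to get $\rho(\mathcal{D}[\alpha])\le\rho(\mathcal{D})\le s$ (strict in the strong case). The paper's own proof is in fact slightly terser, omitting your remarks about $\mathcal{I}[\alpha]$ being the identity and about non-uniqueness of the decomposition, but the argument is the same.
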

\begin{proof}
Let $\mathcal{A}[\alpha]$ be an $m$-order $r$-dimensional principal subtensor $\mathcal{A}$, where $\alpha \subseteq \{1,\dots,n\}$. Let $\mathcal{A} = s \mathcal{I} - \mathcal{D}$ with $\mathcal{D} \geq 0 $ and $s \geq \rho(\mathcal{D})$. Consider the following decomposition  $\mathcal{A}[\alpha] = s \mathcal{I} - \mathcal{D}[\alpha]$. Then $\mathcal{D}[\alpha]$ is a principal subtensor of $\mathcal{D}$. Now, by Lemma \ref{spec_sub} we have $\rho(\mathcal{D}[\alpha]) \leq \rho(\mathcal{D})$. But $s \geq \rho(\mathcal{D})$ and hence $s \geq  \rho (\mathcal{D}[\alpha])$. Thus $\mathcal{A}[\alpha]$ is an $\mathcal{M}$-tensor. The proof for the case of  strong $\mathcal{M}$-tensors is similar.
\end{proof}

Since each diagonal entry of a tensor is a principal subtensor, Theorem \ref{principal} yields a new proof to the following known result.

\begin{cor}\cite[Propositions 4 and 15]{qi-ding-wei}
The diagonal entries of an $\mathcal{M}$-tensor (a strong $\mathcal{M}$-tensor) are nonnegative (positive).
\end{cor}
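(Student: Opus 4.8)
The plan is to deduce the corollary directly from Theorem \ref{principal} by recognizing that a single diagonal entry $A_{ii\dots i}$ of the tensor $\mathcal{A}$ is precisely the principal subtensor $\mathcal{A}[\{i\}]$, i.e. an $m$-order $1$-dimensional tensor. So the first step is to fix $i \in \{1,\dots,n\}$ and set $\alpha = \{i\}$; by Theorem \ref{principal}, $\mathcal{A}[\alpha]$ is again an $\mathcal{M}$-tensor (resp. strong $\mathcal{M}$-tensor).

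Next I would unwind what it means for a $1$-dimensional $m$-order tensor to be an $\mathcal{M}$-tensor. Write $\mathcal{A}[\alpha] = s\mathcal{I} - \mathcal{D}[\alpha]$, where $\mathcal{D}[\alpha] \geq 0$ and $s \geq \rho(\mathcal{D}[\alpha])$ (resp. $s > \rho(\mathcal{D}[\alpha])$). In dimension $1$ everything is a scalar: $\mathcal{A}[\alpha]$ is the number $A_{ii\dots i}$, the identity tensor $\mathcal{I}$ contributes the scalar $1$, and $\mathcal{D}[\alpha]$ is the single nonnegative number $d := D_{ii\dots i} \geq 0$. The only eigenvalue of the $1$-dimensional tensor $(d)$ is $d$ itself (the eigen-equation $d\,x^{m-1} = \lambda x^{m-1}$ for $x \neq 0$ forces $\lambda = d$), so $\rho(\mathcal{D}[\alpha]) = d$. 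Hence the $\mathcal{M}$-tensor condition reads $A_{ii\dots i} = s - d$ with $s \geq d$, giving $A_{ii\dots i} \geq 0$; and the strong $\mathcal{M}$-tensor condition gives $s > d$, hence $A_{ii\dots i} > 0$.

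Since $i$ was arbitrary, this shows every diagonal entry of an $\mathcal{M}$-tensor is nonnegative and every diagonal entry of a strong $\mathcal{M}$-tensor is positive, which is the claim. There is no real obstacle here; the only point requiring a word of care is the identification of the diagonal entry with a $1$-dimensional principal subtensor and the observation that $\rho$ of a $1$-dimensional nonnegative tensor equals its unique entry. Both are immediate from the definitions of principal subtensor, identity tensor, and eigenvalue given in Section \ref{definitions}.
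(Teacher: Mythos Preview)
Your proposal is correct and follows exactly the approach indicated in the paper: the paper simply observes that each diagonal entry is a $1$-dimensional principal subtensor and invokes Theorem \ref{principal}. Your write-up merely makes explicit the trivial step that a $1$-dimensional nonnegative tensor $(d)$ has $\rho = d$, so an $\mathcal{M}$-tensor (resp.\ strong $\mathcal{M}$-tensor) in dimension $1$ is a nonnegative (resp.\ positive) scalar.
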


The following observation will be used in the study of interval hull of $\mathcal{M}$-tensors.

\begin{obs}\label{rep_inde}
Let $\mathcal{A}$ be an $\mathcal{M}$-tensor. If $\mathcal{A} = t \mathcal{I} - \mathcal{E}$ for some $t \geq 0$ and $\mathcal{E} \geq 0$, then $\rho(\mathcal{E}) \leq t$. If $\mathcal{A}$ is a strong $\mathcal{M}$-tensor and  $\mathcal{A} = t\mathcal{I} - \mathcal{E}$ for some $t \geq 0$  and $\mathcal{E} \geq 0$, then $\rho(\mathcal{E}) < t$.
\end{obs}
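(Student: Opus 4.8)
The plan is to reduce the statement to the definition of an $\mathcal{M}$-tensor by comparing two $\mathcal{Z}$-tensor representations of the same tensor $\mathcal{A}$. Suppose $\mathcal{A}$ is an $\mathcal{M}$-tensor, so by definition there are $s \geq 0$ and $\mathcal{D} \geq 0$ with $\mathcal{A} = s\mathcal{I} - \mathcal{D}$ and $s \geq \rho(\mathcal{D})$; suppose also $\mathcal{A} = t\mathcal{I} - \mathcal{E}$ with $t \geq 0$, $\mathcal{E} \geq 0$. Subtracting the two representations gives $(s - t)\mathcal{I} = \mathcal{D} - \mathcal{E}$. Comparing off-diagonal entries (those with $i_1, \dots, i_m$ not all equal) forces $D_{i_1 \dots i_m} = E_{i_1 \dots i_m}$ for every off-diagonal index, since $\mathcal{I}$ vanishes there. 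Comparing diagonal entries gives $s - t = D_{i\dots i} - E_{i\dots i}$ for each $i$, so in particular $\mathcal{E} = \mathcal{D} - (s-t)\mathcal{I}$, and $s - t$ is a common value across all diagonal positions.

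Now I split into cases according to the sign of $s - t$. If $s = t$, then $\mathcal{E} = \mathcal{D}$ and the conclusion $\rho(\mathcal{E}) = \rho(\mathcal{D}) \leq s = t$ is immediate (and strict if $\mathcal{A}$ is a strong $\mathcal{M}$-tensor). If $s > t$, then $\mathcal{E} = \mathcal{D} - (s-t)\mathcal{I} \leq \mathcal{D}$, and since $\mathcal{E} \geq 0$ we may apply Proposition~\ref{mono}(a) to get $\rho(\mathcal{E}) \leq \rho(\mathcal{D}) \leq s$; but we want the bound in terms of $t$, so instead I apply Proposition~\ref{spec_poly} with $a = 1$, $b = -(s-t)$: the eigenvalues of $\mathcal{E} = \mathcal{D} - (s-t)\mathcal{I}$ are exactly $\lambda - (s-t)$ for $\lambda$ an eigenvalue of $\mathcal{D}$. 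By Proposition~\ref{perron_frob}(a), $\rho(\mathcal{D})$ is itself an eigenvalue of $\mathcal{D}$, hence $\rho(\mathcal{D}) - (s-t)$ is an eigenvalue of $\mathcal{E}$; since $\mathcal{E} \geq 0$, Proposition~\ref{perron_frob}(a) also gives that $\rho(\mathcal{E})$ is a nonnegative eigenvalue of $\mathcal{E}$, so $\rho(\mathcal{E}) \geq \rho(\mathcal{D}) - (s-t) \geq 0$. Combined with $\rho(\mathcal{E}) \leq \rho(\mathcal{D})$ from monotonicity, and using $s \geq \rho(\mathcal{D})$, one needs to check $\rho(\mathcal{E}) \leq t$. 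Finally, if $s < t$, then $\mathcal{D} = \mathcal{E} + (t - s)\mathcal{I} \geq \mathcal{E}$ with $t - s > 0$; here I invoke Proposition~\ref{spec_poly} the other direction, noting $\rho(\mathcal{E})$ is an eigenvalue of $\mathcal{E}$, so $\rho(\mathcal{E}) + (t-s)$ is an eigenvalue of $\mathcal{D}$, whence $\rho(\mathcal{E}) + (t - s) \leq \rho(\mathcal{D}) \leq s$, giving $\rho(\mathcal{E}) \leq s - (t - s) = 2s - t$; but more directly $\rho(\mathcal{E}) \leq \rho(\mathcal{D}) - (t-s) \leq s - (t-s) < t$.

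The cleanest uniform argument, which I would actually write, avoids the case split: from $\mathcal{A} = s\mathcal{I} - \mathcal{D} = t\mathcal{I} - \mathcal{E}$ we get $\mathcal{E} = \mathcal{D} + (s - t)\mathcal{I}$ is $\mathcal{D}$ shifted by a diagonal tensor. By Proposition~\ref{spec_poly}, $\rho(\mathcal{E})$ being an eigenvalue of $\mathcal{E}$ (Proposition~\ref{perron_frob}(a), as $\mathcal{E} \geq 0$) corresponds to an eigenvalue $\mu$ of $\mathcal{D}$ with $\rho(\mathcal{E}) = \mu + (s - t)$; since $|\mu| \leq \rho(\mathcal{D})$ we get $\rho(\mathcal{E}) \leq \rho(\mathcal{D}) + s - t \leq s + s - t$... this needs care, so the case analysis above is safer. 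The main obstacle is the bookkeeping in relating $\rho(\mathcal{E})$ to $t$ rather than to $s$ when $s$ and $t$ differ: the key realization is that $s - t$ is forced to be a single scalar (not position-dependent) by the off-diagonal agreement, and then the spectral shift Proposition~\ref{spec_poly} together with the Perron--Frobenius fact that $\rho$ is always attained as an eigenvalue (Proposition~\ref{perron_frob}(a)) does the rest. The strong $\mathcal{M}$-tensor assertion follows by carrying the strict inequality $s > \rho(\mathcal{D})$ through the same chain.
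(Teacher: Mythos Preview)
Your approach is the same as the paper's---a case split on the sign of $s-t$ together with Proposition~\ref{spec_poly} and Proposition~\ref{perron_frob}(a)---but the case $s>t$ is left with a genuine gap. There you shift in the wrong direction: you pass from eigenvalues of $\mathcal{D}$ to those of $\mathcal{E}$, obtain the \emph{lower} bound $\rho(\mathcal{E})\ge \rho(\mathcal{D})-(s-t)$, and then explicitly write ``one needs to check $\rho(\mathcal{E})\le t$'' without doing so. Neither that lower bound nor the monotonicity estimate $\rho(\mathcal{E})\le \rho(\mathcal{D})\le s$ yields $\rho(\mathcal{E})\le t$ when $t<s$.

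The fix is exactly the computation you already carried out in the $s<t$ case: since $\mathcal{E}\ge 0$, Proposition~\ref{perron_frob}(a) gives that $\rho(\mathcal{E})$ is a (real, nonnegative) eigenvalue of $\mathcal{E}$; hence $\rho(\mathcal{E})+(s-t)$ is an eigenvalue of $\mathcal{D}=\mathcal{E}+(s-t)\mathcal{I}$ by Proposition~\ref{spec_poly}, so $\rho(\mathcal{E})+(s-t)\le \rho(\mathcal{D})\le s$ and thus $\rho(\mathcal{E})\le t$. The paper in fact records the sharper identity $\rho(\mathcal{E})+(s-t)=\rho(\mathcal{D})$ (equivalently $\rho(\mathcal{A}+c\mathcal{I})=\rho(\mathcal{A})+c$ whenever $\mathcal{A}\ge 0$ and $c\ge 0$), which makes the two cases completely symmetric and removes the need for your attempted ``uniform'' argument. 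Finally, the preliminary discussion about off-diagonal entries agreeing and $s-t$ being ``a single scalar'' is superfluous: $s$ and $t$ are given scalars, so $\mathcal{E}=\mathcal{D}+(t-s)\mathcal{I}$ is immediate from $s\mathcal{I}-\mathcal{D}=t\mathcal{I}-\mathcal{E}$.
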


\begin{proof}
 Let $\mathcal{A} = s \mathcal{I} - \mathcal{D}$ with $\mathcal{D} \geq 0$ and $s \geq \rho(\mathcal{D})$. Let $\mathcal{A} = t \mathcal{I} - \mathcal{E}$.
  If $s \leq t$, then $\mathcal{A} = t\mathcal{I} - \mathcal{E} = t\mathcal{I} -( (t-s)\mathcal{I} + \mathcal{D})$ and $(t-s)\mathcal{I} + \mathcal{D} \geq 0$. Now, it follows from Proposition \ref{spec_poly} and the nonnegativity of $\mathcal{D}$ that $(t-s) + \rho(\mathcal{D}) = \rho(\mathcal{E})$ and hence $\rho(\mathcal{E}) \leq t$. If $s \geq t$, then $\mathcal{A} = s \mathcal{I} - ((s-t)\mathcal{I}+\mathcal{E})$ and $(s-t)\mathcal{I}+\mathcal{E} \geq 0$. Again by Proposition \ref{spec_poly}, we have $(s-t)+\rho(\mathcal{E}) = \rho(\mathcal{D})$ and hence $\rho(\mathcal{E}) \leq t$. The proof for a strong $\mathcal{M}$-tensor is similar.
\end{proof}

In the next theorem we prove that if a $\mathcal{Z}$-tensor $\mathcal{B}$ is greater than or equal to an $\mathcal{M}$-tensor entry-wise, then $\mathcal{B}$ is also an $\mathcal{M}$-tensor.

\begin{theorem}\label{strong_int}
Let $\mathcal{A}$ and $\mathcal{B}$ be  $m$-order $n$-dimensional $\mathcal{Z}$-tensors such that $\mathcal{A} \leq \mathcal{B}$.
 \begin{itemize}
 \item[(a)]Suppose $\mathcal{A}$ is an $\mathcal{M}$-tensor (a strong $\mathcal{M}$-tensor). Then $\mathcal{B}$ is also an  $\mathcal{M}$-tensor (a strong $\mathcal{M}$-tensor).
\item[(b)] Suppose $\mathcal{A}$ is a  weakly irreducible $\mathcal{M}$-tensor and $\mathcal{A} \neq \mathcal{B}$. Then $\mathcal{B}$ is a strong $\mathcal{M}$-tensor.
\end{itemize}
\end{theorem}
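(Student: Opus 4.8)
The plan is to reduce everything to the monotonicity results of Section~\ref{weak} by putting $\mathcal{A}$ and $\mathcal{B}$ into a common $\mathcal{Z}$-tensor representation. Since $\mathcal{A}$ and $\mathcal{B}$ are $\mathcal{Z}$-tensors, one can find a real number $s\ge 0$ large enough that $\mathcal{D}_{\mathcal{A}}:=s\mathcal{I}-\mathcal{A}$ and $\mathcal{D}_{\mathcal{B}}:=s\mathcal{I}-\mathcal{B}$ are both nonnegative (take $s$ to dominate all diagonal entries of $\mathcal{A}$ and of $\mathcal{B}$; the off-diagonal entries of a $\mathcal{Z}$-tensor are already nonpositive, so they become nonnegative under $s\mathcal{I}-\,\cdot\,$). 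Then $\mathcal{A}=s\mathcal{I}-\mathcal{D}_{\mathcal{A}}$ and $\mathcal{B}=s\mathcal{I}-\mathcal{D}_{\mathcal{B}}$, and the hypothesis $\mathcal{A}\le \mathcal{B}$ translates into $0\le \mathcal{D}_{\mathcal{B}}\le \mathcal{D}_{\mathcal{A}}$. I would also record the small remark that weak irreducibility of a $\mathcal{Z}$-tensor does not depend on the chosen representation: passing from $\mathcal{D}$ to $\mathcal{D}+c\mathcal{I}$ changes only the diagonal entries of the associated matrix $R(\,\cdot\,)$, and reducibility of a matrix is insensitive to its diagonal; hence if $\mathcal{A}$ is a weakly irreducible $\mathcal{M}$-tensor, then $\mathcal{D}_{\mathcal{A}}$ is weakly irreducible.

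For part (a): if $\mathcal{A}$ is an $\mathcal{M}$-tensor then, since $\mathcal{A}=s\mathcal{I}-\mathcal{D}_{\mathcal{A}}$ with $s\ge 0$ and $\mathcal{D}_{\mathcal{A}}\ge 0$, Observation~\ref{rep_inde} gives $\rho(\mathcal{D}_{\mathcal{A}})\le s$ (and $\rho(\mathcal{D}_{\mathcal{A}})<s$ when $\mathcal{A}$ is a strong $\mathcal{M}$-tensor). By Proposition~\ref{mono}(a), the inequality $0\le \mathcal{D}_{\mathcal{B}}\le \mathcal{D}_{\mathcal{A}}$ yields $\rho(\mathcal{D}_{\mathcal{B}})\le \rho(\mathcal{D}_{\mathcal{A}})$, so $\rho(\mathcal{D}_{\mathcal{B}})\le s$ (respectively $\rho(\mathcal{D}_{\mathcal{B}})<s$), and therefore $\mathcal{B}=s\mathcal{I}-\mathcal{D}_{\mathcal{B}}$ is an $\mathcal{M}$-tensor (respectively a strong $\mathcal{M}$-tensor).

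For part (b): now $\mathcal{A}$ is a weakly irreducible $\mathcal{M}$-tensor and $\mathcal{A}\ne \mathcal{B}$, so by the remark above $\mathcal{D}_{\mathcal{A}}$ is weakly irreducible, and $\mathcal{D}_{\mathcal{B}}\ne \mathcal{D}_{\mathcal{A}}$. Applying Theorem~\ref{irre_mono}(b) to the pair $0\le \mathcal{D}_{\mathcal{B}}\le \mathcal{D}_{\mathcal{A}}$ with $\mathcal{D}_{\mathcal{A}}$ weakly irreducible gives $\rho(\mathcal{D}_{\mathcal{B}})<\rho(\mathcal{D}_{\mathcal{A}})$. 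Combining this with $\rho(\mathcal{D}_{\mathcal{A}})\le s$ from Observation~\ref{rep_inde}, we obtain $\rho(\mathcal{D}_{\mathcal{B}})<s$, hence $\mathcal{B}=s\mathcal{I}-\mathcal{D}_{\mathcal{B}}$ is a strong $\mathcal{M}$-tensor.

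The only genuinely delicate point is the one flagged in the first paragraph: that the reduction to a common representation $s\mathcal{I}-\mathcal{D}$ is harmless, both for the order relation $\mathcal{A}\le\mathcal{B}$ and for the weak-irreducibility hypothesis. Once that is in place, the statement follows immediately from Observation~\ref{rep_inde} together with Proposition~\ref{mono} and Theorem~\ref{irre_mono}, with essentially no computation required.
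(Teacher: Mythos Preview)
Your proof is correct and follows essentially the same approach as the paper's: both arguments put $\mathcal{A}$ and $\mathcal{B}$ into $\mathcal{Z}$-tensor representations with a common scalar, invoke Observation~\ref{rep_inde}, and then appeal to the (strict) monotonicity of the spectral radius. Your version is in fact slightly tidier, choosing a single common $s$ from the outset and explicitly justifying that weak irreducibility is independent of the representation, a point the paper leaves implicit.
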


\begin{proof}
\begin{itemize}
\item[(a)] Let $A$ be an $\mathcal{M}$-tensor. Then $\mathcal{A} = s \mathcal{I} - \mathcal{D}$, where $\mathcal{D} \geq 0$ and $s \geq \rho(\mathcal{D})$.
 Choose $t \geq s$  and $ \mathcal{E} \geq 0$ such that  $\mathcal{B} = t \mathcal{I} - \mathcal{E}$. Then  we have $\mathcal{A} = t \mathcal{I} - \mathcal{D}^{'}$ where $\mathcal{D}^{'} \geq \mathcal{E} \geq 0$.   Thus $\rho(\mathcal{D}^{'}) \geq \rho(\mathcal{E})$. By Observation \ref{rep_inde}, we have $t \geq \rho(\mathcal{D}^{'})$, so $t \geq \rho(\mathcal{E})$. Hence $\mathcal{B}$ is an $\mathcal{M}$-tensor. The proof for strong $\mathcal{M}$-tensors is similar.

\item[(b)] We have  $\mathcal{D}^{'}$ is weakly irreducible. Hence by Observation \ref{rep_inde} and Theorem \ref{irre_mono}, we have $t \geq \rho(\mathcal{D}^{'})> \rho(\mathcal{E})$. Thus $\mathcal{B}$ is a strong $\mathcal{M}$-tensor.
\end{itemize}\end{proof}

\begin{cor} \cite[Theorem 3.13]{qi-zh-zho}
Let $\mathcal{D}$ be any nonnegative diagonal tensor. If $\mathcal{A}$ is a symmetric  $\mathcal{M}$-tensor (strong $\mathcal{M}$-tensor), then $\mathcal{A}+\mathcal{D}$ is also a symmetric  $\mathcal{M}$-tensor (strong $\mathcal{M}$-tensor).
\end{cor}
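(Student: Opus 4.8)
The plan is to derive this corollary directly from Theorem \ref{strong_int}(a). The key observation is that a nonnegative diagonal tensor $\mathcal{D}$ can be written as $\mathcal{D} = c\mathcal{I} - \mathcal{D}'$ where $c$ is any number at least as large as the maximum diagonal entry of $\mathcal{D}$ and $\mathcal{D}' = c\mathcal{I} - \mathcal{D} \geq 0$ is again diagonal; however, more directly, if $\mathcal{A} = s\mathcal{I} - \mathcal{E}$ with $\mathcal{E} \geq 0$ and $s \geq \rho(\mathcal{E})$ (respectively $s > \rho(\mathcal{E})$), and $\mathcal{D}$ has diagonal entries $d_1, \dots, d_n \geq 0$, then setting $d = \max_i d_i$ we can write $\mathcal{A} + \mathcal{D} = (s + d)\mathcal{I} - \mathcal{E}'$, where $\mathcal{E}' = \mathcal{E} + (d\mathcal{I} - \mathcal{D})$. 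Since $d\mathcal{I} - \mathcal{D}$ is a nonnegative diagonal tensor, $\mathcal{E}' \geq \mathcal{E} \geq 0$, so $\mathcal{A} + \mathcal{D}$ is a $\mathcal{Z}$-tensor.

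First I would verify that $\mathcal{A} + \mathcal{D}$ is symmetric: $\mathcal{A}$ is symmetric by hypothesis and $\mathcal{D}$ is diagonal, hence symmetric, and the sum of symmetric tensors is symmetric. Next I would note that $\mathcal{A} \leq \mathcal{A} + \mathcal{D}$ since $\mathcal{D} \geq 0$, and that both $\mathcal{A}$ and $\mathcal{A} + \mathcal{D}$ are $\mathcal{Z}$-tensors (the latter by the decomposition above). Then Theorem \ref{strong_int}(a) applies verbatim: since $\mathcal{A}$ is an $\mathcal{M}$-tensor (strong $\mathcal{M}$-tensor) and $\mathcal{A} + \mathcal{D}$ is a $\mathcal{Z}$-tensor with $\mathcal{A} \leq \mathcal{A} + \mathcal{D}$, the tensor $\mathcal{A} + \mathcal{D}$ is an $\mathcal{M}$-tensor (strong $\mathcal{M}$-tensor). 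Combined with symmetry, this gives the claim.

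I do not anticipate a serious obstacle here; the only mild subtlety is confirming that $\mathcal{A} + \mathcal{D}$ genuinely lies in the class of $\mathcal{Z}$-tensors so that Theorem \ref{strong_int} is applicable — this is exactly the decomposition $\mathcal{A} + \mathcal{D} = (s+d)\mathcal{I} - \mathcal{E}'$ with $\mathcal{E}' \geq 0$ worked out above. Everything else is immediate. An alternative, essentially equivalent route would be to invoke Observation \ref{rep_inde} to be agnostic about which representation $s\mathcal{I} - \mathcal{E}$ one picks, but the direct argument via Theorem \ref{strong_int}(a) is cleanest and I would present that.

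\begin{proof}
Write $\mathcal{A} = s\mathcal{I} - \mathcal{E}$ with $\mathcal{E} \geq 0$ and $s \geq \rho(\mathcal{E})$ (with strict inequality if $\mathcal{A}$ is a strong $\mathcal{M}$-tensor). Let $d_1, \dots, d_n \geq 0$ be the diagonal entries of $\mathcal{D}$ and put $d = \max_{1 \leq i \leq n} d_i$. Then $d\mathcal{I} - \mathcal{D}$ is a nonnegative (diagonal) tensor, so
\[\mathcal{A} + \mathcal{D} = (s + d)\mathcal{I} - \bigl(\mathcal{E} + (d\mathcal{I} - \mathcal{D})\bigr)\]
exhibits $\mathcal{A} + \mathcal{D}$ as a $\mathcal{Z}$-tensor. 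Since $\mathcal{D}$ is diagonal it is symmetric, hence $\mathcal{A} + \mathcal{D}$ is symmetric. Moreover $\mathcal{D} \geq 0$ gives $\mathcal{A} \leq \mathcal{A} + \mathcal{D}$, and both tensors are $\mathcal{Z}$-tensors. By Theorem \ref{strong_int}(a), $\mathcal{A} + \mathcal{D}$ is an $\mathcal{M}$-tensor (a strong $\mathcal{M}$-tensor) whenever $\mathcal{A}$ is. Together with the symmetry just noted, this proves the corollary.
\end{proof}
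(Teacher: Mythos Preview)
Your proof is correct and follows exactly the route the paper intends: the corollary is stated immediately after Theorem \ref{strong_int} with no separate proof, so the implicit argument is precisely the one you give --- check that $\mathcal{A}+\mathcal{D}$ is a $\mathcal{Z}$-tensor, note $\mathcal{A}\le \mathcal{A}+\mathcal{D}$, and apply Theorem \ref{strong_int}(a). Your explicit verification that $\mathcal{A}+\mathcal{D}$ is a $\mathcal{Z}$-tensor via the decomposition $(s+d)\mathcal{I}-(\mathcal{E}+d\mathcal{I}-\mathcal{D})$ fills in the only detail the paper leaves to the reader.
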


In the following theorem we give a sufficient condition for the interior of the interval hull of two tensors to be a subset of the class of strong $\mathcal{M}$-tensors. It is an application of the Frobenius normal form for  weakly irreducible nonnegative tensors  and the strict monotonicity of the spectral radius of weakly irreducible nonnegative tensors.
\begin{theorem}\label{main_int}
Let $\mathcal{A}$ and $\mathcal{B}$ be two $m$-order $n$-dimensional tensors. Suppose that $\mathcal{A} \leq \mathcal{B}$, $\mathcal{A}$ is an $\mathcal{M}$-tensor and $\mathcal{B}$ is a strong $\mathcal{M}$-tensor. Then $\mathcal{C}$ is a  strong $\mathcal{M}$-tensor for all $\mathcal{C} \in int(I(\mathcal{A}, \mathcal{B}))$
\end{theorem}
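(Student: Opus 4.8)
The plan is to fix an arbitrary $\mathcal{C}\in int(I(\mathcal{A},\mathcal{B}))$ and argue by contradiction. First I would record what the definition of $int(I(\mathcal{A},\mathcal{B}))$ gives us. Since every weight $t_{i_1\dots i_m}$ lies in $(0,1)$ and $\mathcal{A}\le\mathcal{B}$, we get $\mathcal{A}\le\mathcal{C}\le\mathcal{B}$; the off-diagonal entries of $\mathcal{C}$ are convex combinations of the nonpositive off-diagonal entries of $\mathcal{A}$ and $\mathcal{B}$, hence $\mathcal{C}$ is a $\mathcal{Z}$-tensor; and, crucially, for each multi-index $J$ either $A_J=B_J=C_J$ or $A_J<C_J<B_J$. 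By Theorem \ref{strong_int}(a), $\mathcal{C}$ is already an $\mathcal{M}$-tensor. Next I fix one real number $s\ge 0$ exceeding every diagonal entry of $\mathcal{A}$, $\mathcal{B}$, $\mathcal{C}$ and write $\mathcal{A}=s\mathcal{I}-\mathcal{D}_{\mathcal{A}}$, $\mathcal{B}=s\mathcal{I}-\mathcal{D}_{\mathcal{B}}$, $\mathcal{C}=s\mathcal{I}-\mathcal{D}_{\mathcal{C}}$ with $\mathcal{D}_{\mathcal{A}}\ge\mathcal{D}_{\mathcal{C}}\ge\mathcal{D}_{\mathcal{B}}\ge 0$. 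By Observation \ref{rep_inde} we have $\rho(\mathcal{D}_{\mathcal{A}})\le s$, $\rho(\mathcal{D}_{\mathcal{C}})\le s$ and $\rho(\mathcal{D}_{\mathcal{B}})<s$, and $\mathcal{C}$ can fail to be a strong $\mathcal{M}$-tensor only if $\rho(\mathcal{D}_{\mathcal{C}})=s$, which I assume from here on.

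Now I would extract a weakly irreducible ``bottleneck block''. If $\mathcal{D}_{\mathcal{C}}$ is weakly irreducible, take $\alpha=\{1,\dots,n\}$; otherwise Propositions \ref{weak_irr_block} and \ref{weak_irr_block_spec} produce a nonempty $\alpha\subsetneq\{1,\dots,n\}$ with $\mathcal{D}_{\mathcal{C}}[\alpha]$ weakly irreducible and $\rho(\mathcal{D}_{\mathcal{C}}[\alpha])=\rho(\mathcal{D}_{\mathcal{C}})$. In either case $\mathcal{D}_{\mathcal{C}}[\alpha]$ is weakly irreducible with spectral radius $s$. Restricting the three $\mathcal{Z}$-decompositions to $\alpha$ gives $\mathcal{A}[\alpha]=s\mathcal{I}[\alpha]-\mathcal{D}_{\mathcal{A}}[\alpha]$, and likewise for $\mathcal{B}[\alpha],\mathcal{C}[\alpha]$, with $\mathcal{D}_{\mathcal{A}}[\alpha]\ge\mathcal{D}_{\mathcal{C}}[\alpha]\ge\mathcal{D}_{\mathcal{B}}[\alpha]\ge 0$; by Theorem \ref{principal}, $\mathcal{A}[\alpha]$ is an $\mathcal{M}$-tensor and $\mathcal{B}[\alpha]$ a strong $\mathcal{M}$-tensor. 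The point that makes the argument work is that $\mathcal{D}_{\mathcal{A}}[\alpha]$ is also weakly irreducible: the matrix $R(\cdot)$ is entrywise nondecreasing in its nonnegative argument, so $R(\mathcal{D}_{\mathcal{A}}[\alpha])\ge R(\mathcal{D}_{\mathcal{C}}[\alpha])$, and a nonnegative matrix dominating an irreducible nonnegative matrix is itself irreducible; hence $\mathcal{A}[\alpha]$ is a \emph{weakly irreducible} $\mathcal{M}$-tensor.

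Finally I split into two cases. If $\mathcal{A}[\alpha]\ne\mathcal{C}[\alpha]$, then Theorem \ref{strong_int}(b), applied to the weakly irreducible $\mathcal{M}$-tensor $\mathcal{A}[\alpha]$ and the $\mathcal{Z}$-tensor $\mathcal{C}[\alpha]\ge\mathcal{A}[\alpha]$, forces $\mathcal{C}[\alpha]$ to be a strong $\mathcal{M}$-tensor, so $\rho(\mathcal{D}_{\mathcal{C}}[\alpha])<s$ by Observation \ref{rep_inde}, contradicting $\rho(\mathcal{D}_{\mathcal{C}}[\alpha])=s$. If instead $\mathcal{A}[\alpha]=\mathcal{C}[\alpha]$, then $A_J=C_J$ for every multi-index $J$ supported on $\alpha$, which by the strictness recorded in the first paragraph forces $A_J=B_J$ for all such $J$ (if $A_J<B_J$ we would have $A_J<C_J$); thus $\mathcal{C}[\alpha]=\mathcal{B}[\alpha]$ is a strong $\mathcal{M}$-tensor, and Observation \ref{rep_inde} again yields $\rho(\mathcal{D}_{\mathcal{C}}[\alpha])<s$, a contradiction. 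Hence $\rho(\mathcal{D}_{\mathcal{C}})<s$, i.e.\ $\mathcal{C}$ is a strong $\mathcal{M}$-tensor.

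I expect the only genuine obstacle to be seeing that one has to localize to the weakly irreducible block $\alpha$ coming from the Frobenius-type normal form of $\mathcal{D}_{\mathcal{C}}$, and that on that block the interior hypothesis ($A_J\ne B_J\Rightarrow A_J<C_J<B_J$) is exactly what is needed to reduce everything — through Theorem \ref{strong_int}(b) and the upward inheritance of weak irreducibility under $\le$ — to the strict monotonicity of the spectral radius of weakly irreducible nonnegative tensors (Theorem \ref{irre_mono}, as packaged in Observation \ref{rep_inde}). The remaining steps are routine bookkeeping with a common $s$ and the fact that principal subtensors of $\mathcal{Z}$-tensors are $\mathcal{Z}$-tensors.
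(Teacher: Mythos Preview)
Your proof is correct and follows essentially the same route as the paper: localize via the Frobenius normal form (Propositions \ref{weak_irr_block} and \ref{weak_irr_block_spec}) to a weakly irreducible block $\alpha$ of the nonnegative part of $\mathcal{C}$, and then combine the interior hypothesis with strict monotonicity (Theorem \ref{irre_mono}, packaged as Theorem \ref{strong_int}(b)) to derive a contradiction. The only cosmetic differences are that the paper first disposes separately of the case where $\mathcal{A}$ is already strong, whereas you absorb it into the contradiction argument, and you make the upward inheritance of weak irreducibility under $\le$ explicit where the paper leaves it implicit.
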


\begin{proof}

Let $\mathcal{A}$  be an $\mathcal{M}$-tensor and $\mathcal{B}$ be a strong $\mathcal{M}$-tensor. If $\mathcal{A}$ is a strong $\mathcal{M}$-tensor, then by Theorem \ref{strong_int}, the elements of $I(\mathcal{A},\mathcal{B})$ are strong $\mathcal{M}$-tensors.

Without loss of generality assume $\mathcal{A}= t \mathcal{I} - \mathcal{D}$ and $\mathcal{B}= t \mathcal{I} - \mathcal{E}$,  $t > \rho(\mathcal{E})$, $t = \rho{(\mathcal{D})}$ and $\mathcal{E},\mathcal{D} \geq 0$. Let $\mathcal{C} \in int(I(\mathcal{A},\mathcal{B}))$, then $\mathcal{C} = t \mathcal{I} - \mathcal{F}$ such that $\mathcal{F} \geq 0$. Since $\mathcal{A} \leq \mathcal{C}$ and $\mathcal{A} \neq \mathcal{C}$, we have  $\mathcal{D} \geq \mathcal{F}$ and $\mathcal{D} \neq \mathcal{F}$.

Suppose $\mathcal{F}$ is weakly irreducible. Then by Theorem \ref{strong_int}, $\mathcal{C}$ is a strong $\mathcal{M}$-tensor.

Suppose $\mathcal{F}$ is weakly reducible. Then by Propositions \ref{weak_irr_block} and \ref{weak_irr_block_spec}, there exists a partition $\{\alpha_1, \dots , \alpha_k\}$ of $\{1, \dots, n\}$ such that the principal subtensors of $\mathcal{F}$ corresponding to $\alpha_i$ are weakly irreducible for all $i = 1, \dots ,n$ and $\rho(\mathcal{F}) = \rho(\mathcal{F}[\alpha_j])$ for some $j$.  Since $\rho(\mathcal{D})= t$, by Proposition \ref{spec_sub} we have $t \geq \rho(\mathcal{D}[\alpha_j])$. If $\rho(\mathcal{F}[\alpha_j]) < \rho(\mathcal{D}[\alpha_j])$ or $t > \rho(\mathcal{D}[\alpha_j])$, then we are done. Otherwise,  $\rho(\mathcal{F}[\alpha_j]) = \rho(\mathcal{D}[\alpha_j])= t$. Then by Theorem \ref{irre_mono} we have $\mathcal{F}[\alpha_j] = \mathcal{D}[\alpha_j]$. As $\mathcal{C} \in int(I(\mathcal{A}, \mathcal{B}))$,  $\mathcal{F}[\alpha_j] = \mathcal{D}[\alpha_j]= \mathcal{E}[\alpha_j]$. But then $t =\rho(\mathcal{D}[\alpha_j])=\rho(\mathcal{F}[\alpha_j]) = \rho(\mathcal{E}[\alpha_j]) < t$, a contradiction.

 Thus we have that $\mathcal{C}$ is a strong $M$-tensor.
\end{proof}

\begin{rem}
In the above theorem, the condition that $\mathcal{B}$ is a strong $\mathcal{M}$-tensor is necessary. As is evident from the proof, if $\mathcal{B}$ is an $\mathcal{M}$-tensor but not a strong $\mathcal{M}$-tensor, then any $\mathcal{C} \in int (I(\mathcal{A}, \mathcal{B}))$ is an $\mathcal{M}$-tensor, but not a strong $\mathcal{M}$-tensor.
\end{rem}


\section{Interval hull of $P(P_0)$-tensors and  Positive (semi) definite tensors}\label{positive}
In this section, we establish  for each of the following classes positive (semi)definite tensors, $P(P_0)$-tensors, a necessary and sufficient condition for an interval hull of tensors to be a subset of the class.  We assert that for checking the positive (semi)definiteness of the entire interval hull it is enough to check the positive (semi)definiteness of only finitely many tensors in that interval. Similarly for $P(P_0)$-tensors. First we recall the definition of positive (semi)definite tensors.

 For an $m$-order $n$-dimensional tensor $\mathcal{A}$ and a vector $x \in \mathbb{C}$, the scalar $\mathcal{A}x^{m}$ is defined to be $\mathcal{A}x^m =\sum_{i_1, \dots , i_m =1}^{n} A_{i_1\dots i_m}x_{i_1}\cdots x_{i_m}$.

\begin{defn}[\cite{qi1}]
{\rm
Let $\mathcal{A}$ be an $m$-order $n$-dimensional tensor. Then $\mathcal{A}$ is said to be a \emph{positive semidefinite tensor}
if for any vector $x \in \mathbb{R}^n$, $\mathcal{A}x^m \geq 0$, and $\mathcal{A}$ is called a \emph{positive definite tensor} if for any nonzero vector $x \in \mathbb{R}^n$, $\mathcal{A}x^{m} >0$.
}
\end{defn}

From the definition it is clear that, if $m$ is odd, then there is no nontrivial positive semidefinite tensors. Next we recall the definition of $P(P_0)$-tensors.

\begin{defn}[\cite{song_qi}]
{\rm
Let $\mathcal{A}$ be an  $m$-order $n$-dimensional tensor. Then $\mathcal{A}$ is said to be a \emph{$P$-tensor}
if for any nonzero vector $x \in \mathbb{R}^n$, $x_i(\mathcal{A}x^{m-1})_i > 0$ for some $i \in \{1, \dots , n\}$, and $\mathcal{A}$ is called a \emph{$P_0$-tensor} if for any nonzero vector $x \in \mathbb{R}^n$, $x_i (\mathcal{A}x^{m-1})_i \geq 0$ for some $i \in \{1, \dots, n\}$ with $x_i \neq 0$.
}
\end{defn}

Let $\mathbf{Z}=\{(z_1,\dots,z_n)\in \mathbb{R}^n : z_i= \pm 1~\mbox{for all}~ i\in \{1,\dots,n\}\}$. For each $z =(z_1,\dots , z_n) \in \mathbf{Z}$
we define the matrix  $D_z$ to be  the diagonal matrix with $z_1, \dots, z_n$ as the diagonal entries.

For an interval hull $I(\mathcal{A}, \mathcal{B})$,  its center and its radius,  denoted by $\mathcal{I}_c$
and $\Delta$, respectively, are defined as follows:
\begin{center}
$\mathcal{I}_c = \frac{\mathcal{B} + \mathcal{A}}{2}$ and $\Delta = \frac{\mathcal{B}- \mathcal{A}}{2}.$
\end{center}

From the definition, it is clear that $\Delta \geq 0$.

\begin{defn}[\cite{ya-ya1}]
For an $m$-order $n$-dimensional tensor $\mathcal{A}$ and an $n \times n$ matrix $B$, the product $\mathcal{C} = \mathcal{A} \overbrace{B \cdots B}^{m} $ is defined as follows:
\begin{center}

$C_{i_1\dots i_m} = \sum_{j_1,\dots , j_m=1}^{n} A_{j_1 \dots j_m}  B_{i_1 j_1} \cdots B_{i_m j_m} $.

\end{center}
\end{defn}

For an interval hull of tensors $I(\mathcal{A}, \mathcal{B})$ and for each $z \in \mathbf{Z}$ we define $\mathcal{I}_z = \mathcal{I}_c - \Delta \overbrace{D_z \cdots D_z}^{m }$. It is easy to see that $\mathcal{I}_z \in I(\mathcal{A}, \mathcal{B})$ for all $z \in \mathbf{Z}$.

For a vector $x \in \mathbb{R}^n$, we define its sign vector $z = sgn(x)$ by $$ z_i =
\left\{
	    \begin{array}{ll}
		1 & \mbox{if } x_i \geq 0, \\
		-1 & \mbox{otherwise}.
	\end{array}
\right.$$

The following result is an extension of  \cite[Theorem 2.1]{rohn2} to  tensors.

\begin{theorem}\label{key_psd}
Let $\mathcal{A}$ and $\mathcal{B}$ be two $m$-order $n$-dimensional tensors and $\mathcal{A} \leq \mathcal{B}$. Assume that $x \in \mathbb{R}^n$ and  $z = sgn(x)$. Then for each $\mathcal{C} \in I(\mathcal{A}, \mathcal{B})$ and each $i \in \{1, \dots ,n \}$ we have $x_i (\mathcal{C}x^{m-1})_i \geq x_i(\mathcal{I}_z x^{m-1})_i.$

\end{theorem}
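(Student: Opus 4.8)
The plan is to prove the claimed inequality entrywise by comparing, for a fixed index $i$, the quantity $x_i(\mathcal{C}x^{m-1})_i$ over all $\mathcal{C}\in I(\mathcal{A},\mathcal{B})$ and showing that $\mathcal{I}_z$ with $z=\mathrm{sgn}(x)$ realizes the minimum. First I would expand both sides in coordinates: writing $\mathcal{C}=(C_{i_1\dots i_m})$ we have $x_i(\mathcal{C}x^{m-1})_i = \sum_{i_2,\dots,i_m} C_{ii_2\dots i_m}\, x_i x_{i_2}\cdots x_{i_m}$, and similarly for $\mathcal{I}_z$. Since the inequality is a sum over the multi-indices $(i_2,\dots,i_m)$, it suffices to prove the termwise inequality $C_{ii_2\dots i_m}\, x_i x_{i_2}\cdots x_{i_m} \ge (\mathcal{I}_z)_{ii_2\dots i_m}\, x_i x_{i_2}\cdots x_{i_m}$ for each fixed $(i,i_2,\dots,i_m)$.

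Next I would compute the entries of $\mathcal{I}_z$ explicitly. By the definition $\mathcal{I}_z = \mathcal{I}_c - \Delta\, D_z\cdots D_z$ and the formula for the tensor--matrix product, since $D_z$ is diagonal with $(D_z)_{pq}=z_p\delta_{pq}$, the product $\Delta D_z\cdots D_z$ has $(p_1,\dots,p_m)$-entry equal to $\Delta_{p_1\dots p_m} z_{p_1}\cdots z_{p_m}$. Hence $(\mathcal{I}_z)_{ii_2\dots i_m} = (\mathcal{I}_c)_{ii_2\dots i_m} - \Delta_{ii_2\dots i_m}\, z_i z_{i_2}\cdots z_{i_m}$, where $\mathcal{I}_c=\tfrac{\mathcal{B}+\mathcal{A}}{2}$ and $\Delta=\tfrac{\mathcal{B}-\mathcal{A}}{2}$. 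Meanwhile, any $\mathcal{C}\in I(\mathcal{A},\mathcal{B})$ satisfies $A_{ii_2\dots i_m}\le C_{ii_2\dots i_m}\le B_{ii_2\dots i_m}$, i.e. $C_{ii_2\dots i_m} = (\mathcal{I}_c)_{ii_2\dots i_m} + s\,\Delta_{ii_2\dots i_m}$ for some $s\in[-1,1]$ depending on the multi-index.

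The termwise inequality then reduces to
\[
\bigl(s\,\Delta_{ii_2\dots i_m} + \Delta_{ii_2\dots i_m}\, z_i z_{i_2}\cdots z_{i_m}\bigr)\, x_i x_{i_2}\cdots x_{i_m} \ge 0 .
\]
Here I would use the two key sign facts: $\Delta_{ii_2\dots i_m}\ge 0$ (stated in the excerpt, since $\Delta\ge 0$), and $z_p x_p = |x_p|$ for every $p$ because $z=\mathrm{sgn}(x)$ — indeed $z_p=1$ when $x_p\ge 0$ and $z_p=-1$ when $x_p<0$. Multiplying, $(z_i z_{i_2}\cdots z_{i_m})(x_i x_{i_2}\cdots x_{i_m}) = |x_i|\,|x_{i_2}|\cdots|x_{i_m}| \ge 0$, and this product has absolute value $\ge |x_i x_{i_2}\cdots x_{i_m}|\ge s\,x_i x_{i_2}\cdots x_{i_m}$ for any $s\in[-1,1]$. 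Therefore $s\,x_i\cdots x_{i_m} + z_i x_i\cdots z_{i_m}x_{i_m} \ge 0$, and multiplying by the nonnegative scalar $\Delta_{ii_2\dots i_m}$ gives the termwise inequality. Summing over $(i_2,\dots,i_m)$ yields $x_i(\mathcal{C}x^{m-1})_i \ge x_i(\mathcal{I}_z x^{m-1})_i$.

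I do not expect a serious obstacle here; the only thing to be careful about is bookkeeping — the scalar $s$ (equivalently the parameter $t_{ii_2\dots i_m}\in[0,1]$ in the definition of $I(\mathcal{A},\mathcal{B})$) varies with the multi-index, so the reduction to the termwise statement must be done before invoking $s\in[-1,1]$, and one must track that $x_i$ appears as a genuine factor of the monomial $x_i x_{i_2}\cdots x_{i_m}$ so that the sign identity $z_px_p=|x_p|$ applies to every factor including $p=i$. Once the termwise inequality $\Delta_{ii_2\dots i_m}\,(s + z_iz_{i_2}\cdots z_{i_m})\,x_ix_{i_2}\cdots x_{i_m}\ge 0$ is isolated, it is immediate from $\Delta\ge 0$ and $z=\mathrm{sgn}(x)$.
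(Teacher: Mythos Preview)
Your proof is correct and follows essentially the same idea as the paper's: both use that any $\mathcal{C}\in I(\mathcal{A},\mathcal{B})$ differs from the center $\mathcal{I}_c$ by at most $\Delta$ entrywise, and that $z_px_p=|x_p|$ turns the $\Delta$-term into $|x_i|(\Delta|x|^{m-1})_i$, which identifies the lower bound with $x_i(\mathcal{I}_z x^{m-1})_i$. The only cosmetic difference is that the paper works at the vector level via the triangle inequality $|x_i((\mathcal{C}-\mathcal{I}_c)x^{m-1})_i|\le |x_i|(\Delta|x|^{m-1})_i$ and then expands, whereas you parametrize each entry as $(\mathcal{I}_c)_{ii_2\dots i_m}+s\,\Delta_{ii_2\dots i_m}$ and verify the inequality term by term; the computations are the same.
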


\begin{proof}

Let $\mathcal{C} \in I(\mathcal{A}, \mathcal{B})$ and $i \in \{1, \dots, n\}$. Then,
\begin{eqnarray}
 |x_i (\mathcal{C}x^{m-1})_i - x_i (\mathcal{I}_c x^{m-1})_i | &=&  |x_i ((\mathcal{C} - \mathcal{I}_c)x^{m-1})_i|     \nonumber \\
   &\leq& |x_i| |((\mathcal{C} - \mathcal{I}_c)x^{m-1})_i| \nonumber \\
   & \leq & |x_i| (\Delta |x|^{m-1})_i. \nonumber
\end{eqnarray}
Thus $x_i(\mathcal{C}x^{m-1})_i \geq   x_i (\mathcal{I}_c x^{m-1})_i - |x_i| (\Delta |x|^{m-1})_i $. Also we have $z = sgn(x)$ and $|x_i| = z_i x_i$ for all $i \in \{1, \dots ,n\}$. Now,
\begin{eqnarray}
 x_i (\mathcal{C}x^{m-1})_i & \geq & x_i (\mathcal{I}_c x^{m-1})_i - |x_i| (\Delta |x|^{m-1})_i \nonumber \\
      & = & \sum_{i_2, \dots, i_m = 1}^{n} x_i (\mathcal{I}_c)_{ii_2\dots i_m} x_{i_2} \cdots x_{i_m}- |x_i| \Delta_{ii_2 \dots i_m}|x|_{i_2}\cdots |x|_{i_m} \nonumber \\
      & = &  x_i \sum_{i_2, \dots, i_m = 1}^{n} ((\mathcal{I}_c)_{ii_2\dots i_m} -  \Delta_{ii_2 \dots i_m}z_i z_{i_2} \cdots z_{i_m})x_{i_2} \cdots x_{i_m} \nonumber \\
      & = &  x_i (\mathcal{I}_z x^{m-1})_i . \nonumber
\end{eqnarray}
Hence the proof.
\end{proof}

The following result is an extension of  \cite[Theorem 3.2]{rohn2} to tensors.

\begin{theorem}\label{int_p}
Let $\mathcal{A}$ and $\mathcal{B}$ be two $m$-order $n$-dimensional tensors and $\mathcal{A} \leq \mathcal{B}$. Then the following statements are equivalent:
\begin{itemize}

\item[(a)] $\mathcal{C}$ is  a $P$-tensor for all $\mathcal{C} \in I(\mathcal{A}, \mathcal{B})$.
\item[(b)] $\mathcal{I}_z$ is a $P$-tensor for all $z  \in \mathbf{Z}$.

\end{itemize}
\end{theorem}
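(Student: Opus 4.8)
The plan is to prove the two implications of the equivalence separately, with the interesting direction being (b) $\Rightarrow$ (a); the reverse direction (a) $\Rightarrow$ (b) is immediate since each $\mathcal{I}_z$ belongs to $I(\mathcal{A},\mathcal{B})$, as noted in the excerpt just before Theorem \ref{key_psd}.

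For (b) $\Rightarrow$ (a), I would argue as follows. Let $\mathcal{C} \in I(\mathcal{A},\mathcal{B})$ and let $x \in \mathbb{R}^n$ be an arbitrary nonzero vector. Put $z = \mathrm{sgn}(x) \in \mathbf{Z}$. By Theorem \ref{key_psd}, for every index $i \in \{1,\dots,n\}$ we have $x_i(\mathcal{C}x^{m-1})_i \geq x_i(\mathcal{I}_z x^{m-1})_i$. Since $\mathcal{I}_z$ is a $P$-tensor and $x \neq 0$, there is some index $i$ with $x_i(\mathcal{I}_z x^{m-1})_i > 0$. For that same $i$ the displayed inequality gives $x_i(\mathcal{C}x^{m-1})_i \geq x_i(\mathcal{I}_z x^{m-1})_i > 0$. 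Hence $x_i(\mathcal{C}x^{m-1})_i > 0$ for some $i$, which is exactly the defining condition for $\mathcal{C}$ to be a $P$-tensor. As $x$ was arbitrary nonzero, $\mathcal{C}$ is a $P$-tensor, and as $\mathcal{C}$ was arbitrary in the interval hull, (a) follows.

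There is really no serious obstacle here: the whole weight of the argument has been front-loaded into Theorem \ref{key_psd}, which provides the pointwise lower bound $x_i(\mathcal{C}x^{m-1})_i \geq x_i(\mathcal{I}_z x^{m-1})_i$ uniformly over $\mathcal{C} \in I(\mathcal{A},\mathcal{B})$ and over $i$. The only mild subtlety worth stating explicitly is that the \emph{same} index $i$ serves both for the strict positivity coming from the $P$-tensor property of $\mathcal{I}_{\mathrm{sgn}(x)}$ and for the inequality of Theorem \ref{key_psd}; since that theorem holds for every $i$ simultaneously, one simply picks the $i$ furnished by the $P$-tensor definition applied to $\mathcal{I}_z$. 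It is also worth remarking that $\mathbf{Z}$ is finite (of cardinality $2^n$), so condition (b) is genuinely a finite check, which is the point of the theorem; no compactness or limiting argument is needed. I would close the proof with a one-line remark that the identical scheme — replacing strict inequalities by non-strict ones throughout — will handle the $P_0$-tensor case, anticipating the companion result that presumably follows.
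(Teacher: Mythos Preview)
Your proof is correct and follows essentially the same route as the paper: both directions are handled exactly as you describe, with (a)$\Rightarrow$(b) immediate from $\mathcal{I}_z\in I(\mathcal{A},\mathcal{B})$ and (b)$\Rightarrow$(a) obtained by applying Theorem~\ref{key_psd} with $z=\mathrm{sgn}(x)$ and then invoking the $P$-tensor property of $\mathcal{I}_z$ at the appropriate index. Your write-up is in fact slightly more explicit than the paper's in spelling out the choice of $i$, but there is no substantive difference.
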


\begin{proof}
$(a)\Rightarrow(b)$ is obvious. For the reverse, suppose $\mathcal{I}_z$ is a $P$-tensor for all $z \in \mathbf{Z}$. Let $\mathcal{C} \in  I(\mathcal{A}, \mathcal{B})$, $x \in \mathbb{R}^n$ and $z = sgn(x)$. Then by Theorem \ref{key_psd}, we have $x_i (\mathcal{C}x^{m-1})_i \geq x_i (\mathcal{I}_z x^{m-1})_i$ for all $i \in \{1, \dots, n\}$.
Hence $\mathcal{C}$ is a $P$-tensor.
\end{proof}

By the same proof:
\begin{theorem}\label{int_p_0}
Let $\mathcal{A}$ and $\mathcal{B}$ be two $m$-order $n$-dimensional tensors and $\mathcal{A} \leq \mathcal{B}$. Then the following statements are equivalent:
\begin{itemize}

\item[(a)] $\mathcal{C}$ is  a $P_0$-tensor for all $\mathcal{C} \in I(\mathcal{A}, \mathcal{B})$.
\item[(b)] $\mathcal{I}_z$ is a $P_0$-tensor for all $z  \in \mathbf{Z}$.

\end{itemize}
\end{theorem}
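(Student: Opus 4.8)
The plan is to mimic verbatim the argument used for Theorem \ref{int_p}, replacing the strict inequalities that come from the $P$-tensor definition by the weak ones appropriate to $P_0$-tensors; the whole proof then rests on the pointwise estimate of Theorem \ref{key_psd}.

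The implication $(a)\Rightarrow(b)$ is immediate: for every $z\in\mathbf{Z}$ the tensor $\mathcal{I}_z$ lies in $I(\mathcal{A},\mathcal{B})$, as recorded right after its definition, so if every member of the interval hull is a $P_0$-tensor then in particular each $\mathcal{I}_z$ is one.

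For $(b)\Rightarrow(a)$, I would fix $\mathcal{C}\in I(\mathcal{A},\mathcal{B})$ and a nonzero $x\in\mathbb{R}^n$, and set $z=sgn(x)$. Since $\mathcal{I}_z$ is a $P_0$-tensor, there is an index $i\in\{1,\dots,n\}$ with $x_i\neq 0$ and $x_i(\mathcal{I}_z x^{m-1})_i\geq 0$. Applying Theorem \ref{key_psd} to this same $x$, its sign vector $z$, and this index $i$ gives $x_i(\mathcal{C}x^{m-1})_i\geq x_i(\mathcal{I}_z x^{m-1})_i\geq 0$, while still $x_i\neq 0$. Since $x$ was an arbitrary nonzero vector, $\mathcal{C}$ is a $P_0$-tensor, which is exactly $(a)$.

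There is essentially no obstacle here, since the analytic content has already been isolated in Theorem \ref{key_psd}; the only thing to verify is that the particular index $i$ furnished by the $P_0$-property of $\mathcal{I}_z$ (an index at which $x_i\neq 0$) is precisely the kind of index for which the estimate of Theorem \ref{key_psd} is asserted, and it is, because that estimate holds for \emph{every} $i\in\{1,\dots,n\}$. The one point I would state explicitly is that $sgn(x)\in\mathbf{Z}$ for every $x$, including at coordinates where $x_i=0$, which is guaranteed by the convention $z_i=1$ when $x_i\geq 0$; this ensures $z$ is always one of the finitely many vertices for which $\mathcal{I}_z$ is assumed to be a $P_0$-tensor.
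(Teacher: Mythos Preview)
Your proof is correct and follows exactly the approach the paper intends: the paper simply writes ``By the same proof'' after Theorem~\ref{int_p}, and your argument is precisely that same proof with the $P$-tensor condition replaced by the $P_0$-tensor condition, invoking Theorem~\ref{key_psd} at the chosen index.
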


The following result is an extension of  \cite[Theorem 2]{rohn1} to tensors.

\begin{theorem}\label{int_psd}
Let $\mathcal{A}$ and $\mathcal{B}$ be two $m$-order $n$-dimensional tensors and $\mathcal{A} \leq \mathcal{B}$. Then the following statements are equivalent:
\begin{itemize}

\item[(a)] $\mathcal{C}$ is a positive semidefinite tensor for all $\mathcal{C} \in I(\mathcal{A}, \mathcal{B})$.
\item[(b)] $\mathcal{I}_z$ is a positive semidefinite tensor for all $z  \in \mathbf{Z}$.

\end{itemize}
\end{theorem}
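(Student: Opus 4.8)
The plan is to mimic the proof of Theorem~\ref{int_p}, replacing the componentwise argument by a summed one. The implication $(a)\Rightarrow(b)$ is immediate, since $\mathcal{I}_z\in I(\mathcal{A},\mathcal{B})$ for every $z\in\mathbf{Z}$. For the reverse implication, the key observation is that positive semidefiniteness can be expressed through exactly the quantities $x_i(\mathcal{C}x^{m-1})_i$ that already appear in Theorem~\ref{key_psd}: expanding both sides shows
\[
\mathcal{C}x^m=\sum_{i_1,\dots,i_m=1}^{n} C_{i_1\dots i_m}x_{i_1}\cdots x_{i_m}=\sum_{i=1}^{n} x_i(\mathcal{C}x^{m-1})_i .
\]

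First I would fix an arbitrary $\mathcal{C}\in I(\mathcal{A},\mathcal{B})$ and an arbitrary $x\in\mathbb{R}^n$, and set $z=sgn(x)$. Theorem~\ref{key_psd} gives $x_i(\mathcal{C}x^{m-1})_i\ge x_i(\mathcal{I}_z x^{m-1})_i$ for every $i\in\{1,\dots,n\}$. Summing these $n$ inequalities and applying the displayed identity to both $\mathcal{C}$ and $\mathcal{I}_z$, I obtain $\mathcal{C}x^m\ge \mathcal{I}_z x^m$. Since $\mathcal{I}_z$ is positive semidefinite by hypothesis (b), $\mathcal{I}_z x^m\ge 0$, hence $\mathcal{C}x^m\ge 0$. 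As $x$ and $\mathcal{C}$ were arbitrary, (a) follows.

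The argument is essentially routine once Theorem~\ref{key_psd} is available; the only point needing a moment's care is that positive semidefiniteness is a statement about the scalar $\mathcal{C}x^m$ rather than about the individual entries of the vector $\mathcal{C}x^{m-1}$, so the pointwise bounds of Theorem~\ref{key_psd} must be aggregated, and the displayed identity is precisely what makes this aggregation legitimate. I expect no further obstacle. I would also note in passing that the same proof yields the analogous equivalence for positive definite tensors: for $x\neq 0$ the summed inequality still holds and $\mathcal{I}_z x^m>0$, so $\mathcal{C}x^m>0$.
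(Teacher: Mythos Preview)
Your proof is correct and essentially identical to the paper's: both note that $(a)\Rightarrow(b)$ is immediate, then for the converse fix $\mathcal{C}$, $x$, set $z=\mathrm{sgn}(x)$, invoke Theorem~\ref{key_psd} componentwise, and sum using the identity $\mathcal{C}x^m=\sum_i x_i(\mathcal{C}x^{m-1})_i$ to conclude $\mathcal{C}x^m\ge \mathcal{I}_z x^m\ge 0$. Your remark that the same argument yields the positive definite case is exactly what the paper records as Theorem~\ref{int_pd}.
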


\begin{proof}

$(a)\Rightarrow(b)$ is obvious. For the reverse, suppose $\mathcal{I}_z$ is a positive semidefinite tensor for all $z \in \mathbf{Z}$. Let $\mathcal{C} \in  I(\mathcal{A}, \mathcal{B})$, $x \in \mathbb{R}^n$ and $z = sgn(x)$. Then by Theorem \ref{key_psd}, we have $x_i (\mathcal{C}x^{m-1})_i \geq x_i (\mathcal{I}_z x^{m-1})_i$ for all $i \in \{1, \dots, n\}$.
Now, \begin{eqnarray}
\mathcal{C}x^m & = & \sum_{i_1, \dots, i_m =1}^{n} C_{i_1 \dots i_m} x_{i_1} \cdots x_{i_m} \nonumber \\
               & = &  \sum_{i_1 = 1 }^{n} x_{i_1} \sum_{i_2, \dots, i_m =1}^{n} C_{i_1 \dots i_m} x_{i_2} \cdots x_{i_m} \nonumber \\
               & \geq & \sum_{i_1 = 1}^{n} x_{i_1} (\mathcal{I}_z x^{m-1})_{i_1} \nonumber \\
               & = & \mathcal{I}_z x^{m} \geq 0. \nonumber
\end{eqnarray}

Thus $\mathcal{C}$ is a positive semidefinite tensor.
\end{proof}

The same proof shows:

\begin{theorem}\label{int_pd}
Let $\mathcal{A}$ and $\mathcal{B}$ be two $m$-order $n$-dimensional tensors and $\mathcal{A} \leq \mathcal{B}$. Then the following statements are equivalent:
\begin{itemize}

\item[(a)] $\mathcal{C}$ is a positive definite tensor for all $\mathcal{C} \in I(\mathcal{A}, \mathcal{B})$.
\item[(b)] $\mathcal{I}_z$ is a positive definite tensor for all $z  \in \mathbf{Z}$.

\end{itemize}
\end{theorem}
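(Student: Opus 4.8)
The plan is to mirror exactly the proof of Theorem \ref{int_psd}, since a positive definite tensor is characterized by a strict version of the same inequality chain. The equivalence $(a)\Rightarrow(b)$ is trivial because each $\mathcal{I}_z$ lies in $I(\mathcal{A},\mathcal{B})$. For the reverse direction, I would fix $\mathcal{C}\in I(\mathcal{A},\mathcal{B})$ and an arbitrary \emph{nonzero} vector $x\in\mathbb{R}^n$, set $z=sgn(x)$, and invoke Theorem \ref{key_psd} to obtain $x_i(\mathcal{C}x^{m-1})_i\ge x_i(\mathcal{I}_z x^{m-1})_i$ for every $i\in\{1,\dots,n\}$.

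Summing over $i$ exactly as in the proof of Theorem \ref{int_psd} gives
\[
\mathcal{C}x^m \;=\; \sum_{i=1}^n x_i(\mathcal{C}x^{m-1})_i \;\ge\; \sum_{i=1}^n x_i(\mathcal{I}_z x^{m-1})_i \;=\; \mathcal{I}_z x^m.
\]
Since $z\in\mathbf{Z}$ and $\mathcal{I}_z$ is positive definite by hypothesis, and since $x\ne 0$, we have $\mathcal{I}_z x^m>0$, hence $\mathcal{C}x^m>0$. As $x\ne 0$ was arbitrary, $\mathcal{C}$ is a positive definite tensor, which establishes $(b)\Rightarrow(a)$.

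The only point that requires a moment's care — and is the closest thing to an obstacle here — is making sure the strictness is not lost: the inequality $\mathcal{C}x^m\ge\mathcal{I}_z x^m$ coming from Theorem \ref{key_psd} is only weak, so the strict conclusion must come entirely from the strict positivity of $\mathcal{I}_z x^m$, which is why it is essential that $x$ be nonzero and that the summed inequality be chained in the right order. No separate argument is needed for the degenerate directions, since $\mathcal{I}_z$ being positive definite already handles every nonzero $x$ uniformly. Thus the proof is a routine adaptation and I would simply write: ``The same proof as that of Theorem \ref{int_psd} works, with all inequalities involving $\mathcal{I}_z x^m$ made strict for $x\neq 0$.''
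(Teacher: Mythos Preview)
Your proposal is correct and matches the paper's approach exactly: the paper simply writes ``The same proof shows'' before Theorem~\ref{int_pd}, meaning one repeats the argument of Theorem~\ref{int_psd} verbatim, using Theorem~\ref{key_psd} and then the strict positivity of $\mathcal{I}_z x^m$ for $x\ne 0$. Your remark about where the strictness enters is precisely the only point worth noting.
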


\textbf{Acknowledgement}
This work was supported by grant no. G-18-304.2/2011 by the German-Israeli Foundation for Scientific
Research and Development (GIF).

\bibliographystyle{siam}
\bibliography{final}

\begin{thebibliography}{10}

\bibitem{cheng1}
{\sc Kungching Chang, Liqun Qi, and Tan Zhang}, {\em A survey on the spectral
  theory of nonnegative tensors}, Numer. Linear Algebra Appl., 20 (2013),
  pp.~891--912.

\bibitem{cheng}
{\sc K.~C. Chang, Kelly Pearson, and Tan Zhang}, {\em Perron-{F}robenius
  theorem for nonnegative tensors}, Commun. Math. Sci., 6 (2008), pp.~507--520.

\bibitem{chang2}
{\sc K.~C. Chang, Kelly Pearson, and Tan Zhang}, {\em On eigenvalue problems of
  real symmetric tensors}, J. Math. Anal. Appl., 350 (2009), pp.~416--422.

\bibitem{qi-ding-wei}
{\sc Weiyang Ding, Liqun Qi, and Yimin Wei}, {\em {$\mathcal{M}$}-tensors and
  nonsingular {$\mathcal{M}$}-tensors}, Linear Algebra Appl., 439 (2013),
  pp.~3264--3278.

\bibitem{Shmuel}
{\sc S.~Friedland, S.~Gaubert, and L.~Han}, {\em Perron-{F}robenius theorem for
  nonnegative multilinear forms and extensions}, Linear Algebra Appl., 438
  (2013), pp.~738--749.

\bibitem{hu}
{\sc ShengLong Hu, ZhengHai Huang, and LiQun Qi}, {\em Strictly nonnegative
  tensors and nonnegative tensor partition}, Sci. China Math., 57 (2014),
  pp.~181--195.

\bibitem{lim}
{\sc L.-H. Lim}, {\em Singular values and eigenvalues of tensors: a variational
  approach}, Proceedings of the IEEE International Workshop on Computational
  Advances in Multi-Sensor Adaptive Processing CAMSAP'05,, vol. 1 (2005),
  pp.~129--132.

\bibitem{qi1}
{\sc Liqun Qi}, {\em Eigenvalues of a real supersymmetric tensor}, J. Symbolic
  Comput., 40 (2005), pp.~1302--1324.

\bibitem{rajkcs}
{\sc M.~Rajesh~Kannan and K.~C. Sivakumar}, {\em Intervals of certain classes
  of {Z}-matrices}, Discuss. Math. Gen. Algebra Appl., 34 (2014), pp.~85--93.

\bibitem{rohn1}
{\sc Ji{\v{r}}{\'{\i}} Rohn}, {\em Positive definiteness and stability of
  interval matrices}, SIAM J. Matrix Anal. Appl., 15 (1994), pp.~175--184.

\bibitem{rohn2}
{\sc Ji{\v{r}}{\'{\i}} Rohn and Georg Rex}, {\em Interval {$P$}-matrices}, SIAM
  J. Matrix Anal. Appl., 17 (1996), pp.~1020--1024.

\bibitem{song_qi}
{\sc Yisheng Song and Liqun Qi}, {\em Properties of some classes of structured
  tensors}, to appear in : J. Optim. Theory Appl.,.

\bibitem{ya-ya2}
{\sc Qingzhi Yang and Yuning Yang}, {\em Further results for
  {P}erron-{F}robenius theorem for nonnegative tensors {II}}, SIAM J. Matrix
  Anal. Appl., 32 (2011), pp.~1236--1250.

\bibitem{ya-ya1}
{\sc Yuning Yang and Qingzhi Yang}, {\em Further results for
  {P}erron-{F}robenius theorem for nonnegative tensors}, SIAM J. Matrix Anal.
  Appl., 31 (2010), pp.~2517--2530.

\bibitem{qi-zh-zho}
{\sc Liping Zhang, Liqun Qi, and Guanglu Zhou}, {\em {$\mathcal{M}$}-tensors
  and some applications}, SIAM J. Matrix Anal. Appl., 35 (2014), pp.~437--452.

\end{thebibliography}

%
%
%
%

\end{document}